\numberwithin{equation}{section}
\newtheorem{theorem}{Theorem}
\newtheorem{thm}[theorem]{Theorem}
\newtheorem{cor}[theorem]{Corollary}
\newtheorem{lemma}[theorem]{Lemma}
\newtheorem{prop}[theorem]{Proposition}
\theoremstyle{definition}
\newtheorem{defi}[theorem]{Definition}
\newtheorem{problem}[theorem]{Problem}
\theoremstyle{remark}
\newtheorem{remark}[theorem]{Remark}
\numberwithin{theorem}{section}
\newcommand {\M}{\mathbb M}
\newcommand {\C}{\mathbb C}
\newcommand{\eps}{\varepsilon}
\newcommand{\E}{\mathbb{E}}
\newcommand{\cQ}{\mathcal{Q}}
\newcommand{\Z}{\mathbb{Z}}
\newcommand{\Q}{\mathbb{Q}}
\newcommand{\N}{\mathbb{N}}
\newcommand{\R}{\mathbb{R}}
\begin{document}

\title[On weaker notions of nonlinear embeddings.]{On weaker notions of nonlinear embeddings between Banach spaces.}
\subjclass[2010]{Primary: 46B80 } 
 \keywords{}
\author{B. M. Braga }
\address{Department of Mathematics, Statistics, and Computer Science (M/C 249)\\
University of Illinois at Chicago\\
851 S. Morgan St.\\
Chicago, IL 60607-7045\\
USA}\email{demendoncabraga@gmail.com}
\date{}
\maketitle

\begin{abstract}
In this paper, we study nonlinear embeddings between Banach spaces. More specifically, the goal of this paper is to study weaker versions of coarse and uniform embeddability, and to provide suggestive evidences that those weaker embeddings may be stronger than one would think. We do such by proving that many known results regarding coarse and uniform embeddability remain valid for those weaker notions of embeddability.
\end{abstract}

\section{Introduction.}\label{sectionintro}

The study of Banach spaces as metric spaces has recently increased significantly, and much has been done regarding the uniform and coarse theory of Banach spaces in the past two decades. In particular, the study of coarse and uniform embeddings has been receiving a considerable amount of attention (e.g., \cite{B}, \cite{K}, \cite{Ka}, \cite{MN}, \cite{No}). These notes are dedicated to the study of several different notions of nonlinear embeddings between Banach spaces, and our main goal  is to provide the reader with evidences that those kinds of embeddings may not be as different as one would think. 

 Let $(M,d)$ and $(N,\partial)$ be metric spaces, and consider a map $f:(M,d)\to (N,\partial)$. For each $t\geq 0$, we define the \emph{expansion modulus of $f$} as 

$$\omega_f(t)=\sup\{\partial(f(x),f(y))\mid d(x,y)\leq t\},$$\hfill

\noindent  and the \emph{compression modulus of $f$} as

$$\rho_f(t)=\inf\{\partial(f(x),f(y))\mid  d(x,y)\geq t\}.$$\hfill

\noindent Hence, $\rho_f(d(x,y))\leq \partial(f(x),f(y))\leq \omega_f(d(x,y))$, for all $x,y\in M$. The map  $f$ is  uniformly continuous if and only if  $\lim_{t\to 0_+}\omega_f(t)=0$, and $f^{-1}$ exists and it is uniformly continuous if and only if $\rho_f(t)>0$, for all $t>0$. If both $f$ and its inverse $f^{-1}$ are uniformly continuous, $f$ is called a \emph{uniform embedding}. The map $f$ is called \emph{coarse} if $\omega_f(t)<\infty$, for all $t\geq 0$, and \emph{expanding} if $\lim_{t\to\infty}\rho_f(t)=\infty$. If $f$ is both expanding and coarse, $f$ is called a \emph{coarse embedding}. A map which is both a uniform and a coarse embedding is called a \emph{strong embedding}.

Those notions of embeddings  are fundamentally very different, as  coarse embeddings  deal with the large scale geometry of the metric spaces concerned, and uniform embeddings only deal with their local (uniform) structure. Although those notions are fundamentally different, it is still not known whether the existence of those embeddings are equivalent in the Banach space setting. Precisely, the following problem remains open.

\begin{problem}\label{mainproblem}
Let $X$ and $Y$ be Banach spaces. Are the following equivalent?

\begin{enumerate}[(i)]
\item $X$ coarsely embeds into $Y$.
\item $X$ uniformly embeds into $Y$.
\item $X$ strongly embeds into $Y$.
\end{enumerate}
\end{problem}

It is known that Problem \ref{mainproblem} has a positive answer if $Y$ is either $\ell_\infty$ (see \cite{Ka4}, Theorem 5.3) or $\ell_p$, for $p\in [1,2]$ (see  \cite{No}, Theorem 5, and \cite{Ra}, page 1315). C. Rosendal made some improvements on this problem  by showing that if $X$ uniformly embeds into $\ell_p$, then $X$ strongly embeds into $\ell_p$, for all $p\in [1,\infty)$. This result can be generalized by replacing $\ell_p$ with any minimal Banach space (see \cite{B}, Theorem 1.2(i)).

Natural weakenings for the concepts of coarse and uniform embeddings were introduced in \cite{Ro}, and, as it turns out, those weaker notions are rich enough for many applications. Given a map $f:(M,d)\to(N,\partial)$ between metric spaces, we say that $f$ is \emph{uncollapsed} if there exists some $t>0$ such that $\rho_f(t)>0$. The map $f$ is called \emph{solvent} if, for each $n\in\N$, there exists $R>0$, such that 

$$d(x,y)\in [R,R+n] \ \ \text{ implies }\ \  \partial(f(x),f(y))>n,$$\hfill

\noindent for all $x,y\in M$. For each $t\geq 0$, we define the \emph{exact compression modulus of $f$} as

$$\overline{\rho}_f(t)=\inf\{\partial(f(x),f(y))\mid d(x,y)=t\}.$$\hfill

\noindent The map $f$ is called \emph{ almost uncollapsed } if there exists some $t>0$ such that $\overline{\rho}_f(t)>0$.  

It is clear from its definition, that  expanding maps are both solvent and uncollapsed. Also, as $\rho_f(t)\leq \overline{\rho}_f(t)$, for all $t\in[0,\infty)$, uncollapsed maps are also  almost uncollapsed. Therefore, we have Diagram \ref{DiagEx}.

\begin{equation}\label{DiagEx}
    \xymatrix{ 
         & \text{Expanding}  \ar[dl]\ar[dr] &  \\
        \text{Solvent} \ar[dr]& & \text{Uncollapsed}\ar[dl]\\
         & \text{ Almost uncollapsed } &   }
\end{equation}\hfill
	
As a map $f:(M,d)\to (N,\partial)$ has uniformly continuous inverse if and only if $\rho_f(t)>0$, for all $t>0$,  Diagram \ref{DiagUCI} holds.	
	
\begin{equation}\label{DiagUCI}
    \xymatrix{ \text{Uniformly continuous inverse} \ar[r]&
        \text{Uncollapsed}\ar[r] &\text{ Almost uncollapsed }  }\end{equation}\hfill
	
None of the arrows in neither Diagram \ref{DiagEx} nor Diagram \ref{DiagUCI} reverse. Indeed, any bounded uniform embedding is uncollapsed (resp. almost uncollapsed), but it is not expanding (resp. solvent). Examples of uncollapsed maps which are not uniformly continuous are easy to be constructed, as you only need to make sure the map is not injective. At last, Proposition \ref{solventcollapsedmaps} below provides an example of a map which is solvent but collapsed (i.e., not uncollapsed), which covers the remaining arrows.

In \cite{Ro}, Theorem 2, Rosendal showed that if there exists a uniformly continuous uncollapsed map $X\to Y$ between Banach spaces $X$ and $Y$, then $X$ strongly embeds into $\ell_p(Y)$, for any $p\in [1,\infty)$. Rosendal also showed that there exists no map $c_0\to E$ which is both coarse and solvent (resp. uniformly continuous and almost uncollapsed), where $E$ is any reflexive Banach space (see \cite{Ro}, Proposition 63 and Theorem 64). This result is a strengthening of a result of Kalton that says that $c_0$ does not  coarsely embed (resp. uniformly embed) into any reflexive space (see \cite{K}, Theorem 3.6).

Those results naturally raise the following question. 

\begin{problem}\label{mainproblemPartII}
Let $X$ and $Y$ be Banach space. Are the statements in Problem \ref{mainproblem} equivalent to the following weaker statements?

\begin{enumerate}[(i)]\setcounter{enumi}{3}
\item $X$ maps into $Y$ by a map which is coarse and solvent.
\item $X$ maps into $Y$ by a map which is uniformly continuous and almost uncollapsed. 
\end{enumerate}
\end{problem}

In these notes, we will not directly deal with Problem \ref{mainproblem} and Problem \ref{mainproblemPartII} for an arbitrary $Y$, but instead, we intend to provide the reader with many suggestive evidences that those problems either have a positive answer or that any possible differences between the aforementioned embeddings are often negligible. 

We now describe the organization and the main results of this paper. In Section \ref{sectionbackground}, we give all the remaining notation and background necessary for these notes. Also, in Subsection \ref{subsectionEx} we give an example of a  map $\R\to \ell_2(\C)$  which is Lipschitz, solvent and collapsed (Proposition \ref{solventcollapsedmaps}). 

For a Banach space  $X$, let $q_X=\inf\{q\in [2,\infty)\mid X\text{ has cotype }q\}$ (see Subsection \ref{deftype} for definitions regarding type and cotype).  In \cite{MN}, M. Mendel and A. Naor proved that if a Banach space $X$ either coarsely or uniformly embeds into a Banach space $Y$ with nontrivial type, then $q_X\leq q_Y$ (see Theorem 1.9 and Theorem 1.11 of \cite{MN}). In Section \ref{sectioncotype}, we prove the following strengthening of this result.

\begin{thm}\label{solventemb}
Let $X$ and $Y$ be Banach spaces, and assume that $Y$ has nontrivial type. If either 

\begin{enumerate}[(i)]
\item there exists a coarse solvent map $X\to Y$, or
\item there exists a uniformly continuous  almost uncollapsed   map $X\to Y$,
\end{enumerate}

\noindent then, $q_X\leq q_Y$.
\end{thm}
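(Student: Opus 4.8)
The plan is to adapt the Mendel--Naor argument, whose engine is a metric cotype inequality, so that it only uses the information guaranteed by a coarse solvent map (resp. a uniformly continuous almost uncollapsed map) rather than a genuine coarse (resp. uniform) embedding. Recall that Mendel and Naor prove $q_X \le q_Y$ by testing a map $f \colon X \to Y$ against the Hamming-type cube configuration: one considers the finite grid $\Z_m^n$ (or points of the form $\eps/k$ in a suitable ball of $X$) and compares $\sum_{j} \|f(x + \tfrac{m}{2} e_j) - f(x)\|_Y^{q_Y}$ against $\sum_{\eps} \|f(x+\eps) - f(x)\|_Y^{q_Y}$ using the metric cotype $q_Y$ inequality valid in any space of nontrivial type. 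If $f$ were a coarse embedding, the left side is bounded below by $\rho_f(m/2)^{q_Y}$ times the number of terms and the right side is bounded above by $\omega_f(\sqrt n)^{q_Y}$ times the number of terms; balancing the two forces $q_X \le q_Y$ by letting the scale run to infinity. The point is that one has two free parameters here — the ambient scale $R$ at which the grid lives, and the side length $m$ — and this extra room is exactly what solvency provides.

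Concretely, first I would fix the metric cotype inequality for $Y$ (available since $Y$ has nontrivial type, hence finite cotype, hence, by the Mendel--Naor theorem, metric cotype $q_Y$ with some scaling $m = m(n)$). Then, given a coarse solvent $f \colon X \to Y$: for a prescribed $n$, solvency hands us a radius $R$ such that distances in the window $[R, R+n]$ are sent to distances $> n$. I would place a scaled copy of the grid $\Z_m^n$ inside $X$, scaled so that all the "short" edges (the $\pm e_j$ moves of length comparable to one lattice step) \emph{and} the "long" diagonal of length $\asymp \sqrt n$ times a step all land inside the solvent window $[R,R+n]$; since $\sqrt n$ and $n$ differ by only a polynomial factor, choosing the lattice step of size $\asymp R/n$ (or letting $R$ grow appropriately with $n$) makes both the $O(1)$-steps and the $O(\sqrt n)$-steps fall into $[R, R+n]$ once $R$ is large. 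Then the left-hand side of the metric cotype inequality is $\ge n \cdot (\text{number of base points}) \cdot n^{q_Y}$ from the $> n$ lower bound on the short moves, while coarseness bounds the right-hand side: moves of length $\le R+n$ are sent to distances $\le \omega_f(R+n) < \infty$. Plugging into the metric cotype inequality $m^{q_Y} \sum_j \cdots \lesssim n^{q_Y/p}\sum_\eps\cdots$ (the exponent $p$ coming from the modulus of the grid) and pushing $n \to \infty$ yields the constraint on $q_Y$ relative to $q_X$; the dependence of $m(n)$ on $n$ is polynomial, which is what keeps the inequality from degenerating. The uniformly continuous almost uncollapsed case is dual: here one works at small scales, using that $\overline\rho_f(t_0) > 0$ for a single $t_0$ to get the lower bound on one fixed edge length, scaling the grid so its short edges all have that exact length $t_0$, and using uniform continuity to bound $\omega_f$ on the $O(\sqrt n\, t_0)$-diagonals.

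The main obstacle — and the place where one must be careful rather than routine — is that the Mendel--Naor cube test uses edges of a \emph{single} length (the lattice step) for the compression side but a \emph{spread} of lengths up to $\asymp \sqrt n$ times the step for the expansion side, whereas solvency only controls a window $[R, R+n]$ of \emph{additive} width $n$, and almost uncollapsedness controls literally one value $t=t_0$. For the solvent case this is reconcilable because $\sqrt n \cdot (\text{step})$ versus $(\text{step})$ is a multiplicative gap of $\sqrt n$, so I need to arrange that the step size is $\asymp R/\sqrt n$ and that $R$ is large enough that even the longest diagonal, of length $\asymp R$, still lies below $R+n$ — which fails unless the diagonal length is within additive $n$ of its short edges; so in fact the correct move is to use a \emph{rescaled} Mendel--Naor inequality in which all relevant distances are $O(n)$-close, i.e. one restricts to the regime where the grid has bounded "aspect ratio" and absorbs the loss into the exponent. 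Checking that the metric cotype inequality survives this rescaling with a usable exponent — equivalently, verifying that Mendel--Naor's scaling $m \asymp n^{1+ 1/q_Y}$ (or whatever the optimal exponent is) still produces $q_X \le q_Y$ after the window constraint is imposed — is the crux; for the almost uncollapsed case the analogous crux is that controlling $\overline\rho_f$ at one scale $t_0$ suffices precisely because metric cotype's lower-bound side only needs edges of one length, so that direction is actually cleaner once the bookkeeping is set up. I expect the write-up to consist of: (1) recalling the precise metric cotype inequality and its scaling; (2) the solvent case, choosing $n$, extracting $R$, scaling the grid into $[R,R+n]$, and running the inequality; (3) the almost uncollapsed case by the small-scale mirror argument; (4) concluding $q_X \le q_Y$ in both cases.
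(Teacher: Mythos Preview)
Your proposal identifies the right engine (the Mendel--Naor metric cotype inequality) but the scale placement for case (i) is wrong, and this is a genuine gap, not a detail. In the inequality
\[
\sum_{j}\int d\big(f(x+\tfrac{m}{2}e_j),f(x)\big)^q\,d\mu \;\le\; \Gamma^q m^q \int\!\!\int d\big(f(x+\eps),f(x)\big)^q\,d\mu\,d\sigma,
\]
the \emph{compression} (lower-bound) side is the left one, involving the \emph{long} moves $x\mapsto x+\tfrac{m}{2}e_j$; the \emph{expansion} (upper-bound) side is the right one, with the short $\pm e_j$ moves. You propose to put both scales inside the solvent window $[R,R+n]$, then bound the right side by $\omega_f(R+n)$. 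That fails: as $n\to\infty$ the window drifts to infinity, so $\omega_f(R+n)\to\infty$ and the inequality carries no information. Your own obstacle paragraph notices this (``fails unless the diagonal length is within additive $n$ of its short edges'') but does not fix it.

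The paper's fix is to \emph{decouple} the two scales. Using the exponential test map $h(x)=s\sum_j e^{2\pi i x_j/m}e_j$ into $\ell_{q_X}^n(\C)$, the long moves have \emph{exact} length $2s$ (so the left side is $\ge n\,\overline\rho_f(2s)^q$), while the short moves have length $\le 2\pi s n^{1/q_X}/m$. Since $Y$ has nontrivial type and $q>q_Y$, Mendel--Naor give $m_q(Y,n,\Gamma)\asymp n^{1/q}$; choosing $s=n^{\alpha}$ with $\alpha=1/q-1/q_X\in(0,1)$ then makes the short-move length $\asymp 1$ uniformly in $n$ (so coarseness bounds the right side by a constant), while the long-move length $2s=2n^{\alpha}\to\infty$. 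The remaining point is that the long-move length in $X$ is not a single value: composing with a $(1+\eps_n)$-embedding $\ell_{q_X}^n\hookrightarrow X$ (from Maurey--Pisier) smears $2s$ into $[2n^{\alpha},2n^{\alpha}+2]$, and one needs that solvency forces $\sup_n\inf_{t\in[2n^{\alpha},2n^{\alpha}+2]}\overline\rho_\varphi(t)=\infty$ --- this is a separate little proposition about solvent maps (using $\alpha<1$ so consecutive intervals have bounded gaps) that your sketch omits.

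For case (ii) the paper does not run a ``small-scale mirror'': it simply promotes a uniformly continuous almost uncollapsed map $X\to Y$ to a uniformly continuous \emph{solvent} map $X\to \ell_2(Y)$ via Rosendal's amplification $\Phi(x)=(n\varphi(\tfrac{\eps_n}{n}x))_n$, and then appeals to case (i), using that $\ell_2(Y)$ has the same type and cotype exponents as $Y$. Your direct approach for (ii) is salvageable (set $2s=t_0$ so the long edges hit the single good scale, then let the short edges shrink and use uniform continuity), but as written it has the long/short roles swapped (``scaling the grid so its short edges all have that exact length $t_0$'' would throw away the only lower bound you have).
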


Theorem \ref{solventemb} gives us the following corollary.

\begin{cor}\label{funnyOS}
Let $p,q\in [1,\infty)$ be such that $q>\max\{2,p\}$. Any uniformly continuous map  $f:\ell_q\to \ell_p$ (resp. $f:L_q\to L_p$) must satisfy

$$\sup_t\inf_{\|x-y\|=t}\| f(x)-f(y)\|=0.$$\hfill
\end{cor}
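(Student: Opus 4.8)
The plan is to rewrite the left-hand side as $\sup_{t\ge 0}\overline{\rho}_f(t)$ and argue by contradiction. If $\sup_t\overline{\rho}_f(t)>0$, then $\overline{\rho}_f(t_0)>0$ for some $t_0>0$, so the uniformly continuous map $f$ is in addition \emph{almost uncollapsed}, and it is enough to contradict Theorem \ref{solventemb}(ii). For this I need the standard type/cotype bookkeeping: for $q\ge 2$ both $\ell_q$ and $L_q$ have cotype exactly $q$, so $q_{\ell_q}=q_{L_q}=q$; both $\ell_p$ and $L_p$ have cotype $\max\{2,p\}$, so $q_{\ell_p}=q_{L_p}=\max\{2,p\}$; and both $\ell_p$ and $L_p$ have type $\min\{2,p\}$, which is nontrivial exactly when $p>1$. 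Since $q>\max\{2,p\}$ we have $q>2$, hence $q_{\ell_q}=q_{L_q}=q$ in all cases.

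If $p>1$ the proof is immediate: the target $\ell_p$ (resp.\ $L_p$) has nontrivial type, so Theorem \ref{solventemb}(ii) applied to $f$ forces $q=q_{\ell_q}\le q_{\ell_p}=\max\{2,p\}$, contradicting $q>\max\{2,p\}$. Therefore $\sup_t\overline{\rho}_f(t)=0$ whenever $p>1$.

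The only remaining value is $p=1$, and this is the one genuine obstacle, since $\ell_1$ and $L_1$ have trivial type and Theorem \ref{solventemb} does not apply to them directly. I would get around this by postcomposing $f$ with the classical square-root (``snowflake'') embedding of an $L_1$-space into Hilbert space, which exists because the metric of an $L_1$-space is of negative type: there is a map $\Phi$ from $\ell_1$ (resp.\ $L_1$) to $\ell_2$ with $\|\Phi(u)-\Phi(v)\|_2=\|u-v\|_1^{1/2}$ for all $u,v$. Since $s\mapsto s^{1/2}$ is increasing, continuous, and vanishes at $0$, the composition $g=\Phi\circ f:\ell_q\to\ell_2$ (resp.\ $L_q\to\ell_2$) is again uniformly continuous and satisfies $\overline{\rho}_g(t)=\overline{\rho}_f(t)^{1/2}$ for every $t$; in particular $\overline{\rho}_g(t_0)>0$, so $g$ is almost uncollapsed. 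Now $\ell_2$ has (nontrivial) type $2$, so Theorem \ref{solventemb}(ii) gives $q=q_{\ell_q}\le q_{\ell_2}=2$ (resp.\ $q=q_{L_q}\le 2$), contradicting $q>2$; hence $\sup_t\overline{\rho}_f(t)=0$ in this case too. Apart from this last reduction, the whole argument is a direct substitution into Theorem \ref{solventemb}(ii) together with textbook type and cotype values, so that reduction — i.e.\ dealing with the trivial-type targets $\ell_1$ and $L_1$ — is the only step that requires any thought.
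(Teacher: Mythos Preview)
Your proof is correct and follows the same strategy as the paper's: apply Theorem \ref{solventemb}(ii) directly when $p>1$, and for $p=1$ postcompose with a uniform embedding of the target into $\ell_2$ before invoking the theorem. The only cosmetic difference is that the paper simply cites the existence of a uniform embedding $\ell_1\hookrightarrow\ell_2$ (via \cite{No}), whereas you use the explicit snowflake map $\|\Phi(u)-\Phi(v)\|_2=\|u-v\|_1^{1/2}$, which has the minor advantage of handling the $L_1$ target just as transparently.
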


While the unit balls of the $\ell_p$'s are all uniformly homeomorphic to each other (see \cite{OS}, Theorem 2.1), Corollary \ref{funnyOS} says that those uniform homeomorphisms cannot be extended in any reasonable way.

In Section \ref{subsectionPropQ}, we look at Kalton's Property $\cQ$, which was introduced in \cite{K}, Section 4. We prove that Property $\cQ$ is stable under those weaker kinds of embeddings (see Theorem \ref{PropertyQ}). Although the stability of Property $\cQ$ under coarse and uniform embeddings is implicit in \cite{K}, to the best of our knowledge, this is not explicitly written in the literature.

Theorem \ref{PropertyQ} allows us to obtain the following result (see Theorem \ref{IntoSuperreflexive2} below for a stronger result).

\begin{theorem}\label{IntoSuperreflexive}
Let $X$ and $Y$ be Banach spaces, and assume that $Y$ is reflexive (resp. super-reflexive). If either

\begin{enumerate}[(i)]
\item there exists a coarse solvent map $X\to Y$, or
\item there exists a uniformly continuous  almost uncollapsed   map $X\to Y$,
\end{enumerate}

\noindent then, $X$ is either reflexive (resp. super-reflexive) or $X$ has a spreading model equivalent to the $\ell_1$-basis (resp. trivial type). 
\end{theorem}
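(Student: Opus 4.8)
The plan is to deduce Theorem~\ref{IntoSuperreflexive} from the stability of Kalton's Property~$\cQ$ (Theorem~\ref{PropertyQ}) together with a structural dichotomy for Banach spaces failing reflexivity. First I would recall Kalton's characterization: a Banach space $Z$ with Property~$\cQ$ which is not reflexive must contain a "bad" configuration — concretely, by James' theorem a non-reflexive space contains, for every $\theta<1$, a $\theta$-separated sequence witnessing failure of reflexivity, and Kalton's analysis of the graphs $G_k(\M)$ (the interlacing graphs on $k$-subsets) shows that any space into which all the $G_k(\N)$ equi-coarsely (equi-uniformly) embed must have Property~$\cQ$; conversely he extracts from a non-reflexive $X$ a coarse/uniform embedding of the $G_k$'s unless $X$ has a spreading model equivalent to the $\ell_1$-basis. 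So the first step is to invoke Theorem~\ref{PropertyQ} to transfer Property~$\cQ$ from $Y$ (which has it because it is reflexive, by Kalton~\cite{K}) to $X$ under either hypothesis (i) or (ii).

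Next I would run the contrapositive argument on $X$: assume $X$ is not reflexive and does not have a spreading model equivalent to the $\ell_1$-basis, and derive a contradiction with Property~$\cQ$. The mechanism is Kalton's: non-reflexivity gives, via James, a normalized basic sequence $(x_n)$ and biorthogonal functionals with $\inf_n \|x_n^{**}\| $-type estimates so that finite "interlaced" sums stay bounded below and above; if no spreading model of $X$ is $\ell_1$-like, then one can build from $(x_n)$ maps $f_k: G_k(\N)\to X$ that are Lipschitz (for the uniform statement) or coarse (for the coarse statement) and uncollapsed at a fixed scale, uniformly in $k$ — this is exactly the failure of Property~$\cQ$. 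Hence $X$ must already fail Property~$\cQ$, contradicting what we transferred from $Y$. For the super-reflexive case one argues in parallel: a super-reflexive $Y$ has a stronger quantitative form of Property~$\cQ$ (Kalton's version with control on the moduli, related to Markov-type/cotype-style estimates), and the same transfer shows $X$ inherits it; then a non-super-reflexive $X$ which does not have trivial type contains $\ell_p^n$'s uniformly for some $p>1$ (or fails the relevant uniform convexity in a way that, by Pisier/James, produces the interlacing embeddings), again contradicting the inherited quantitative Property~$\cQ$.

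I would organize the write-up as: (1) state that reflexive (resp. super-reflexive) spaces have Property~$\cQ$ (resp. its quantitative strengthening), citing \cite{K}; (2) apply Theorem~\ref{PropertyQ} under hypothesis (i) or (ii) to conclude $X$ has Property~$\cQ$ (resp. the strengthening); (3) recall Kalton's dichotomy that a space with Property~$\cQ$ is either reflexive or has a spreading model equivalent to the $\ell_1$-basis — more precisely, that failure of both forces the equi-coarse/equi-uniform embeddability of the $G_k$'s and hence the failure of Property~$\cQ$; (4) combine. The reference to Theorem~\ref{IntoSuperreflexive2} suggests the quantitative (super-reflexive) half is handled there with sharper moduli bookkeeping, so here I would keep the super-reflexive argument at the level of "the quantitative Property~$\cQ$ of a super-reflexive space forces trivial type of $X$ unless $X$ is super-reflexive," deferring the precise modulus estimates.

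The main obstacle I anticipate is step~(3): extracting, from a non-reflexive $X$ with no $\ell_1$-spreading model, genuine equi-coarse \emph{and} equi-Lipschitz embeddings of the interlacing graphs $G_k(\N)$ into $X$ with a scale of uncollapsedness independent of $k$. Getting coarseness is easy, but controlling the expansion modulus (Lipschitz constant) uniformly in $k$ while keeping the lower "separation at a fixed scale" bound requires the careful James-sequence construction from \cite{K} and is where all the real content sits; the super-reflexive refinement, needing quantitative moduli matching the cotype obstruction, is the subtlest part and is presumably why it is split off into Theorem~\ref{IntoSuperreflexive2}.
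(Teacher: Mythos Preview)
Your reflexive case is essentially the paper's argument: cite that reflexive spaces have Property~$\cQ$ (\cite{K}, Corollary~4.3), transfer Property~$\cQ$ to $X$ via Theorem~\ref{PropertyQ}, and invoke Kalton's dichotomy (\cite{K}, Theorem~4.5) that a space with Property~$\cQ$ is either reflexive or has an $\ell_1$-spreading model. One comment: you propose to \emph{re-prove} that dichotomy (your step~(3) and the ``main obstacle'' paragraph), but the paper simply quotes it as a black box. There is no need to reconstruct the James-sequence/interlacing-graph machinery here.

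The super-reflexive case, however, has a genuine gap. There is no ``stronger quantitative form of Property~$\cQ$'' being used, and the sentence ``a non-super-reflexive $X$ which does not have trivial type contains $\ell_p^n$'s uniformly for some $p>1$'' does not lead anywhere (nor is it the right obstruction). The paper's route is completely different and much cleaner: pass to ultrapowers. Super-reflexivity of $Y$ means every ultrapower $Y^I/\mathcal{U}$ is reflexive, hence has Property~$\cQ$. A coarse solvent map $X\to Y$ (resp.\ a uniformly continuous map $B_X\to Y$ with $\overline{\rho}_\varphi(t)>0$ for some $t\in(0,1)$) induces a map of the same kind $X^I/\mathcal{U}\to Y^I/\mathcal{U}$, so Theorem~\ref{PropertyQ} gives Property~$\cQ$ for every ultrapower of $X$. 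Now assume $X$ has nontrivial type; type is a super-property, so every $X^I/\mathcal{U}$ has nontrivial type, hence cannot have an $\ell_1$-spreading model, hence by Kalton's dichotomy must be reflexive. Thus $X$ is super-reflexive. Your ``quantitative moduli bookkeeping'' is not needed and, as written, is not a proof; the missing idea is the ultrapower passage.
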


Theorem \ref{IntoSuperreflexive} was proven in \cite{K}, Theorem 5.1, for uniform and coarse embeddings into  super-reflexive spaces. Although the  result above for uniform and coarse embeddings into reflexive spaces is implicit in \cite{K}, we could not find this result explicitly written anywhere in the literature. 

It is worth noticing that Theorem \ref{IntoSuperreflexive} cannot be improved for embeddings of $X$ into super-reflexive spaces in order to guarantee that $X$   either is super-reflexive or has a spreading model equivalent to the $\ell_1$-basis (see Remark \ref{Gideon}). 

As mentioned above, Problem \ref{mainproblem} has a positive answer for $Y=\ell_p$, for all $p\in [1,2]$ (see \cite{No}, Theorem 5, and \cite{Ra}, page 1315). In Section \ref{SectionHilbert}, we show that Problem \ref{mainproblemPartII} also has a positive answer in the same settings. Precisely, we show the following. 

\begin{thm}\label{ThmHilbert}
Let $X$ be a Banach space, and $Y=\ell_p$, for any $p\in [1,2]$. Then Problem \ref{mainproblemPartII} has a positive answer.
\end{thm}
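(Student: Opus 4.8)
The plan is to reduce the whole statement to the Hilbertian case $Y=\ell_2$ and then to promote a ``weak'' map into $\ell_2$ to an honest embedding by passing through a translation-invariant negative definite function on $X$. First note that the implications (iii)$\Rightarrow$(iv) and (iii)$\Rightarrow$(v) are automatic, so one only has to produce, from a coarse solvent map (resp. a uniformly continuous almost uncollapsed map) $X\to\ell_p$, a coarse (resp. uniform) embedding of $X$ into $\ell_p$; the known positive answer to Problem \ref{mainproblem} for $\ell_p$, $p\in[1,2]$, then upgrades it to a strong embedding and closes the circle. For the reduction: since $p\le 2$, Schoenberg's theorem gives that $(u,v)\mapsto\|u-v\|_p^{p}$ is a negative type kernel on $\ell_p$, i.e. there is a map $\iota\colon\ell_p\to\ell_2$ with $\|\iota(u)-\iota(v)\|=\|u-v\|_p^{p/2}$. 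As $t\mapsto t^{p/2}$ is increasing, vanishes at $0$ and tends to $\infty$, post-composing with $\iota$ carries a coarse solvent (resp. uniformly continuous almost uncollapsed) map $X\to\ell_p$ to one of the same type $X\to\ell_2$, only reparametrizing $\omega$ and $\overline\rho$. Conversely $\ell_2$ coarsely and uniformly embeds into $\ell_p$ (this is exactly the content behind the positive answer to Problem \ref{mainproblem} for these $Y$), so a coarse (resp. uniform) embedding $X\to\ell_2$ yields one $X\to\ell_p$. Hence it suffices to treat $Y=\ell_2$.

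So let $g\colon X\to\ell_2$ be the given map. Then $K(x,y)=\|g(x)-g(y)\|^{2}$ is a negative type kernel on $X$, and averaging $K$ over the translations of the amenable group $(X,+)$ by an invariant mean produces a translation-invariant, continuous, negative definite function $\psi\colon X\to[0,\infty)$ with $\psi(0)=0$ and $\psi(z)\le\omega_g(\|z\|)^{2}<\infty$ for all $z$; in particular $\psi$ is bounded on bounded sets, and $\psi(z)\to 0$ as $\|z\|\to 0$ when $g$ is uniformly continuous. The hypothesis on $g$ survives the averaging: in case (i) of the theorem, solvency gives $\psi(z)\ge n^{2}$ whenever $\|z\|$ lies in an annulus $[R_n,R_n+n]$; in case (ii), one first observes that a point at distance $t_0$ from a given point is an arbitrarily small perturbation of any point at distance in a small interval around $t_0$, so uniform continuity lets one thicken the sphere $\{\|z\|=t_0\}$ on which $\overline\rho_g>0$ into a genuine band $\{\,a\le\|z\|\le b\,\}$ with $0<a<b$, on which $\psi\ge c>0$. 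The crucial feature is that $\psi$ may now be dilated: for every $\lambda>0$ the function $z\mapsto\psi(\lambda z)$ is again translation-invariant negative definite, with the annuli/band rescaled by $1/\lambda$.

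One then forms a weighted sum $\Psi(z)=\sum_k w_k\,\psi(\lambda_k z)$ over a geometric sequence of scales $\lambda_k$. Starting from a band $\{\,a\le\|z\|\le b\,\}$ with $b>a$, its dilates tile $(0,\infty)$, and with a suitable choice of weights $\Psi$ becomes a translation-invariant negative definite function comparable to the norm at every scale; then the negative type kernel $\Psi(x-y)$ on $X$ realizes a map $h\colon X\to\ell_2$ with $\|h(x)-h(y)\|^{2}=\Psi(x-y)$ which is a coarse (resp. uniform) embedding, and the argument in the solvent case is analogous using the annuli. By the positive answer to Problem \ref{mainproblem} for $\ell_2$ this embedding is in fact strong, and then, composing with $\ell_2\hookrightarrow\ell_p$ as above, $X$ strongly embeds into $\ell_p$, which is what we want.

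The main obstacle is precisely this amplification step. The solvency annuli can be arbitrarily sparse, and a merely uniformly continuous map can have an arbitrarily bad modulus near $0$, so that a naive weighted sum of dilates cannot be made simultaneously finite-valued and bounded below by a positive function of $\|z\|$; balancing the upper and lower moduli forces one to choose the scales $\lambda_k$ and weights $w_k$ very carefully. Carrying this out requires exploiting further structure: the Corson--Klee phenomenon that coarse and uniformly continuous maps between Banach spaces are Lipschitz for large distances, together with the restrictions on $X$ coming from the earlier sections of the paper (e.g.\ $q_X\le 2$ from Theorem \ref{solventemb}, the reflexivity/spreading-model dichotomy of Theorem \ref{IntoSuperreflexive}, and Property $\cQ$ from Theorem \ref{PropertyQ}), which is where the bulk of the work lies.
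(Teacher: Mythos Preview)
Your reduction to $Y=\ell_2$ and the first half of the argument (forming the negative definite kernel $\|g(x)-g(y)\|^2$ and averaging by an invariant mean to obtain a translation-invariant continuous negative definite $\psi$ on $X$) are exactly what the paper does in Lemma \ref{LemmaJR}. The divergence, and the gap, is in what comes next.

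You try to amplify $\psi$ into an honest embedding by summing weighted dilates $\Psi(z)=\sum_k w_k\psi(\lambda_k z)$, and you yourself identify this as the obstacle: the solvency annuli may be arbitrarily sparse, so there is no evident choice of $(\lambda_k,w_k)$ making $\Psi$ simultaneously finite and bounded below by a proper function of $\|z\|$. Your suggestion that Theorems \ref{solventemb}, \ref{IntoSuperreflexive} and \ref{PropertyQ} are the missing ingredients is a red herring; none of those results enter the paper's proof, and it is not clear how knowing $q_X\le 2$ or Property~$\cQ$ would help you balance the sum. As written, the amplification step is not a proof but a hope.

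The paper sidesteps this entirely. Instead of trying to upgrade $\psi$ to a genuine embedding modulus, it passes to the \emph{positive} definite function $F(x)=e^{-\psi(x)}$ and invokes the Aharoni--Maurey--Mityagin characterization: a continuous positive definite $F$ with $F(0)=1$ is the characteristic function of a continuous \emph{linear} operator $U\colon X\to L_0(\mu)$ for some probability space. One then only has to check that $U$ is an isomorphic embedding, and for this the weak lower bound $\limsup_{t\to\infty}\overline\rho(t)=\infty$ (exactly what solvency gives, via Proposition \ref{propsolvent}) is already enough: if $U(x_n)\to 0$ with $\|x_n\|=1$, then $F(tx_n)\to 1$ for every $t$, contradicting $F(t_0 x_n)\le e^{-\overline\rho(t_0)}<1/2$ for a suitably chosen $t_0$. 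This yields condition (vi) of Theorem \ref{ThmHilbertCru}, already known to be equivalent to (i)--(iii), and no amplification is needed.
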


See Theorem \ref{ThmHilbertCru} below for another equivalent condition to the items in Problem \ref{mainproblemPartII} in the case  $Y=\ell_p$, for  $p\in [1,2]$.

At last, in Section \ref{Sectionlinfty}, we give a positive answer to Problem \ref{mainproblemPartII} for $Y=\ell_\infty$. This is a strengthening of Theorem 5.3 of \cite{Ka4}, where Kalton shows that Problem \ref{mainproblem} has a positive answer for $Y=\ell_\infty$. Moreover, Kalton showed that uniform (resp. coarse) embeddability into $\ell_\infty$ is equivalent to Lipschitz embeddability.

\begin{thm}\label{Thmlinfty}
Let $X$ be a Banach space, and $Y=\ell_\infty$. Then Problem \ref{mainproblemPartII} has a positive answer. Moreover, for $Y=\ell_\infty$, items (iv) and (v) of Problem \ref{mainproblemPartII} are also equivalent to Lipschitz   embeddability into $\ell_\infty$.
\end{thm}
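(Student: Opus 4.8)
The plan is to prove the chain of implications (v)~$\Rightarrow$~Lipschitz embeddability~$\Rightarrow$~(iii)~$\Rightarrow$~(i)~$\Rightarrow$~(iv)~$\Rightarrow$~(v), where (i), (iii), (iv), (v) are the items of Problems~\ref{mainproblem} and~\ref{mainproblemPartII} and we specialize $Y=\ell_\infty$. The implications (iii)~$\Rightarrow$~(i)~$\Rightarrow$~(iv) and Lipschitz embeddability~$\Rightarrow$~(iii) are trivial from the definitions (a Lipschitz embedding is both uniform and coarse, hence strong; a strong embedding is coarse and expanding, hence coarse and solvent; and it is uniformly continuous and uncollapsed, hence almost uncollapsed). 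So the whole content is in the single implication (v)~$\Rightarrow$~Lipschitz embeddability into $\ell_\infty$, i.e., \emph{if there exists a uniformly continuous almost uncollapsed map $f\colon X\to\ell_\infty$, then $X$ Lipschitz embeds into $\ell_\infty$}; closing the cycle then gives all equivalences, and since Lipschitz embeddability trivially implies (v), it is equivalent to all of them as well.

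First I would reduce an almost uncollapsed uniformly continuous map to an honest strong (indeed Lipschitz) embedding by the standard two-step argument. Step one: upgrade ``almost uncollapsed'' to ``uncollapsed''. Fix $t_0>0$ with $\overline{\rho}_f(t_0)=:c>0$, so $\|f(x)-f(y)\|\geq c$ whenever $\|x-y\|=t_0$. Using that $X$ is a Banach space (so the sphere of radius $t_0$ around any point is path-connected and one can move between far-apart points in ``unit $t_0$-steps''), a convexity/chaining argument shows that for $\|x-y\|\geq t_0$ one gets a lower bound growing linearly in $\lfloor \|x-y\|/t_0\rfloor$ \emph{provided} one first knows $f$ does not oscillate too wildly — but since $f$ is only uniformly continuous, not Lipschitz, I would instead invoke the linearization/gluing technique: pass to the map $g\colon X\to\ell_\infty(\ell_\infty)=\ell_\infty$ (using that a countable $\ell_\infty$-sum of $\ell_\infty$ is again $\ell_\infty$) given by $g(x)=(f_n(x))_n$ where $f_n(x)=f(x/\lambda_n)$ suitably rescaled, or apply the Corson--Klee-type smoothing so that $g$ becomes Lipschitz for large distances while staying uncollapsed. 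Step two: once we have a map $X\to\ell_\infty$ that is Lipschitz for large distances and uncollapsed, combine it with a bounded uniformly continuous (hence, after smoothing, Lipschitz for small distances) uncollapsed map — which $X$ always admits into $\ell_\infty$ because bounded subsets of $X$ Lipschitz (even isometrically) embed into $\ell_\infty$ via evaluation against a countable norming set — and take the $\ell_\infty$-direct sum to obtain a genuine Lipschitz embedding $X\to\ell_\infty\oplus_\infty\ell_\infty=\ell_\infty$.

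The cleanest route, which I would actually write, avoids ad hoc smoothing: by Theorem~\ref{IntoSuperreflexive} (or rather its hypotheses are not met here since $\ell_\infty$ is not reflexive, so that is not available) — instead I would lean on Kalton's structure theory for maps into $\ell_\infty$. The key point, already exploited in \cite{Ka4}, is that a map $f=(f_n)_n\colon X\to\ell_\infty$ has each coordinate $f_n\colon X\to\R$ uniformly continuous with a common modulus, and uniformly continuous real-valued functions on a Banach space can be replaced, via inf-convolution with a large cone (the Lipschitz regularization $\tilde f_n(x)=\inf_{y}(f_n(y)+L\|x-y\|)$ for $L$ large), by $L$-Lipschitz functions that agree with $f_n$ up to a small additive error on a prescribed scale. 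Doing this coordinatewise with a single $L$ produces a Lipschitz map $\tilde f\colon X\to\ell_\infty$ with $\|\tilde f(x)-\tilde f(y)\|\geq \|f(x)-f(y)\|-\eps$ on the scale $\|x-y\|=t_0$, hence still almost uncollapsed; rescaling the domain then makes it honestly uncollapsed on all large scales, and adding the bounded isometric-into-$\ell_\infty$ piece handles small scales, yielding the Lipschitz embedding.

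The main obstacle I anticipate is Step one: turning \emph{almost} uncollapsed (a lower bound at a single distance $t_0$) into uncollapsed (a lower bound at \emph{every} positive distance, or at least a linear-growth lower bound) for a map that is a priori only uniformly continuous. The Lipschitz regularization above preserves the single-scale lower bound only up to an additive error, so one must choose the regularization scale small relative to $t_0$ and relative to the uncollapsing constant $c$; and one must then propagate the $t_0$-scale lower bound to larger scales, which requires a midpoint/chaining argument that is genuinely available in a Banach space (convex combinations) but must be executed carefully so that uniform continuity controls the accumulated error. Once the map is Lipschitz and uncollapsed at scale $t_0$, upgrading to uncollapsed at all scales $\geq t_0$ is the standard Banach-space midpoint argument, and small scales are absorbed by a bounded Lipschitz piece; so the delicate interplay is entirely between the single-scale hypothesis, the additive error of Lipschitz regularization, and uniform (not Lipschitz) continuity of the original map.
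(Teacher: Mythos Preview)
Your proposed cycle $(v)\Rightarrow\text{Lipschitz}\Rightarrow(iii)\Rightarrow(i)\Rightarrow(iv)\Rightarrow(v)$ has a logical gap: the last link $(iv)\Rightarrow(v)$ is not trivial and you never address it. A coarse solvent map need not be uniformly continuous, so $(iv)$ does not automatically give $(v)$. The paper closes the loop the other way round: it proves $(v)\Rightarrow(iv)$ via Proposition~\ref{Rosendal} with $\mathcal{E}$ the $c_0$-basis (using that $(\oplus\ell_\infty)_{c_0}\hookrightarrow\ell_\infty$), and then establishes the single substantive implication $(iv)\Rightarrow\text{Lipschitz embeddability}$.

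More seriously, your Step~1 --- propagating the single-scale lower bound $\overline\rho_f(t_0)>0$ to all large scales by a ``midpoint/chaining argument'' --- does not work: from $\|x-z\|=\|z-y\|=t_0$ the bounds $\|f(x)-f(z)\|\geq c$ and $\|f(z)-f(y)\|\geq c$ give no lower bound on $\|f(x)-f(y)\|$, since the triangle inequality points the wrong way. The paper's replacement is Lemma~\ref{SonventLipschitz}: given a \emph{Lipschitz} almost uncollapsed $f\colon X\to\ell_\infty$, set $F(x)(q,n)=q^{-1}f(qx)_n$ for $(q,n)\in\Q_+\times\N$; then $F$ is still Lipschitz, and choosing $q=t_0/\|x-y\|$ yields $\|F(x)-F(y)\|\geq(\overline\rho_f(t_0)/t_0)\,\|x-y\|$, i.e.\ a genuine bi-Lipschitz lower bound at every scale. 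You briefly gesture at this (``$f_n(x)=f(x/\lambda_n)$ suitably rescaled'') but then drop it in favour of the defective chaining idea.

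Your inf-convolution regularisation is a legitimate alternative to the paper's route (restrict to a $1$-net, then extend using that $\ell_\infty$ is a $1$-absolute Lipschitz retract) for producing a globally Lipschitz map that is still almost uncollapsed; but from that point you must invoke the $q^{-1}f(q\,\cdot)$ rescaling trick, not a midpoint argument, and you still owe a separate argument linking $(iv)$ back to the rest.
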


Even though we do not give a positive answer to Problem \ref{mainproblem} and Problem \ref{mainproblemPartII}, we believe that the aforementioned results provide considerable suggestive evidences that all the five different kinds of embeddings $X\hookrightarrow Y$ above preserve the geometric properties of $X$ in a similar manner. 

\section{Preliminaries.}\label{sectionbackground}

In these notes, all the Banach spaces are assumed to be over the reals, unless otherwise stated. If $X$ is a Banach space, we denote by $B_X$ its closed unit ball. Let $(X_n,\|\cdot\|_n)_{n=1}^\infty$ be a sequence of Banach spaces. Let $\mathcal{E}=(e_n)_{n=1}^\infty$ be a $1$-unconditional basic sequence in a Banach space $E$ with norm $\|\cdot\|_E$. We define the sum $(\oplus _n X_n)_{\mathcal{E}}$ to be the space of sequences $(x_n)_{n=1}^\infty$, where $x_n\in X_n$, for all $n\in\N$, such that 

$$\|(x_n)_{n=1}^\infty\|\vcentcolon =\Big\|\sum_{n\in\N}\|x_n\|_ne_n\Big\|_E<\infty.$$\hfill

\noindent The space $(\oplus _n X_n)_\mathcal{E}$ endowed with the norm $\|\cdot\|$ defined above is a Banach space.  If the $X_n$'s are all the same, say $X_n=X$, for all $n\in\N$, we write $(\oplus X)_\mathcal{E}$. 

\subsection{Nonlinear embeddings.} \label{SubsectionEmb} Let $X$ be a Banach space and $M$ be a metric space. Then,  a uniformly continuous map $f:X\to M$ is automatically coarse. Moreover, if $f$ is coarse, then there exists $L>0$ such that $\omega_f(t)\leq Lt+L$, for all $t\geq 0$  (see \cite{Ka3}, Lemma 1.4). In particular, $f$ is Lipschitz for large distances. Indeed, if $\|x-y\|\geq L$, we have that $\|f(x)-f(y)\|\leq (L+1)\|x-y\|$. By replacing $f$ by $f(L\ \cdot)/(L+1)$, we can assume that 

$$\|x-y\|\geq 1\ \ \text{  implies }\ \ \|f(x)-f(y)\|\leq \|x-y\|.$$\hfill

The following proposition, proved in \cite{Ro}, Lemma 60, gives us a useful equivalent definition of solvent maps.

\begin{prop}\label{propsolvent}
Let $X$ be a Banach space and $M$ be a metric space. Then a coarse map $f:X\to M$ is solvent if and only if $\sup_{t>0}\overline{\rho}_f(t)=\infty$.  
\end{prop}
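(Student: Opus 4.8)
The plan is to prove both implications directly from the definitions, using the linear structure of $X$ (specifically, that $X$ is a vector space so distances are translation-invariant and scalable) together with the coarseness bound $\omega_f(t) \le Lt + L$ recorded just above the statement. Throughout, fix a coarse map $f \colon X \to M$.

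For the easier direction, suppose $f$ is solvent. I want to show $\sup_{t>0}\overline{\rho}_f(t) = \infty$, i.e.\ that for every $n$ there is some $t>0$ with $\overline{\rho}_f(t) > n$. Apply solvency at the integer $n$: there is $R > 0$ such that $\|x-y\| \in [R, R+n]$ implies $\partial(f(x),f(y)) > n$. In particular, taking $t = R$ (or any $t \in [R,R+n]$), every pair $x,y$ with $\|x-y\| = t$ satisfies $\partial(f(x),f(y)) > n$, so $\overline{\rho}_f(t) \ge n$ — actually $> n$ if we are slightly careful, but in any case $\overline{\rho}_f(t) \ge n$ for each $n$ suffices to conclude the supremum is infinite. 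This direction does not even use coarseness.

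For the converse, suppose $f$ is coarse with $\sup_{t>0}\overline{\rho}_f(t) = \infty$, and fix $n \in \N$; I must produce $R>0$ so that $\|x-y\| \in [R, R+n]$ forces $\partial(f(x),f(y)) > n$. The idea is to pick $t_0$ with $\overline{\rho}_f(t_0)$ very large — large enough to absorb both $n$ and the oscillation of $f$ over distances up to $n$, which is controlled by $\omega_f(n) \le Ln + L < \infty$ — and then set $R$ appropriately in terms of $t_0$. The standard trick: given $x, y$ with $\|x-y\|$ in the target window, move along the segment from $x$ to $y$ (or use a point $z$ on the line through $x,y$) to a point $z$ with $\|x - z\| = t_0$ exactly; this is possible by choosing the window for $\|x-y\|$ to sit around $t_0$, i.e.\ take $R$ with $R \le t_0 \le R+n$, say $R = t_0 - n$ (assuming $t_0 > n$, which we may since $t_0$ can be taken large). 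Then $\partial(f(x),f(z)) \ge \overline{\rho}_f(t_0)$, while $\|z - y\| = \big|\,t_0 - \|x-y\|\,\big| \le n$, so $\partial(f(z),f(y)) \le \omega_f(n) \le Ln+L$; the triangle inequality gives $\partial(f(x),f(y)) \ge \overline{\rho}_f(t_0) - (Ln+L)$. Choosing $t_0$ at the outset so that $\overline{\rho}_f(t_0) > n + Ln + L$ makes this $> n$, as required.

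The main subtlety — and the place to be careful rather than the place of genuine difficulty — is the geometric step that replaces the arbitrary pair $(x,y)$ by a pair $(x,z)$ whose distance is \emph{exactly} $t_0$: one uses that in a Banach space the point $z = x + t_0\frac{y-x}{\|y-x\|}$ lies at distance exactly $t_0$ from $x$ and at distance $\big|\|y-x\| - t_0\big|$ from $y$. One then only needs $\|y-x\| \ge $ something positive for this to make sense, which is guaranteed by taking $R = t_0 - n > 0$. Everything else is bookkeeping with the constant $L$ from the coarseness estimate. I would present the converse as the substantive half and dispatch the forward implication in one or two lines.
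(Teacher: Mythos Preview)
Your proof is correct. The paper itself does not supply a proof of this proposition; it merely cites \cite{Ro}, Lemma 60. Your argument is the natural one and goes through cleanly: the forward direction is immediate from the definitions, and for the converse your interpolation trick (choosing $z = x + t_0\frac{y-x}{\|y-x\|}$ on the line through $x$ and $y$) together with the triangle inequality and the coarseness bound $\omega_f(n)<\infty$ is exactly what is needed. One small remark: your parenthetical ``assuming $t_0>n$, which we may since $t_0$ can be taken large'' is automatically satisfied once $\overline{\rho}_f(t_0)>n+Ln+L$, because $\overline{\rho}_f(t_0)\leq\omega_f(t_0)\leq Lt_0+L$ forces $t_0>n$; you might state this explicitly rather than leave it as an aside.
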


Although the statement of the next proposition is different from Proposition 63 of \cite{Ro}, its proof is the same. However, as its proof is very simple and as this result will play an important role in our notes, for the convenience of the reader, we include its proof here.

\begin{prop}\label{Rosendal}
Let $X$ and $Y$ be a Banach space, and let $\mathcal{E}$ be an $1$-unconditional basic sequence.  Assume that there exists a  uniformly continuous  almost uncollapsed  map $\varphi:X\to Y$. Then, there exists a uniformly continuous  solvent map  $\Phi:X\to (\oplus Y)_\mathcal{E}$. 
\end{prop}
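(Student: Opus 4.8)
The plan is to produce $\Phi$ as an infinite ``direct sum'' of rescaled copies of $\varphi$: in the $k$-th coordinate of $\mathcal{E}$ we place a copy of $Y$ carrying $\varphi$ dilated so as to register one fixed metric scale, weighted by a factor growing with $k$. First I would normalize. Replacing $\varphi$ by $\varphi-\varphi(0)$ changes neither $\omega_\varphi$ nor $\overline{\rho}_\varphi$, so assume $\varphi(0)=0$; pick $t_0>0$ with $\delta\vcentcolon=\overline{\rho}_\varphi(t_0)>0$, and, after replacing $\varphi$ by the (still uniformly continuous) map $x\mapsto\varphi(t_0x)$, assume $t_0=1$, i.e. $\|\varphi(x)-\varphi(y)\|\geq\delta$ whenever $\|x-y\|=1$. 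Since rescaling $\mathcal{E}$ coordinatewise changes $(\oplus Y)_\mathcal{E}$ only by an isometry, assume also $\|e_k\|_E=1$ for all $k$. I would then set
$$\Phi(x)=\big(k\,\varphi(x/\lambda_k)\big)_{k=1}^\infty\in(\oplus Y)_\mathcal{E},$$
where $(\lambda_k)_k$ is an increasing sequence with $\lambda_k\to\infty$ chosen below.

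The only delicate point is the choice of $(\lambda_k)_k$, which must beat the (uncontrolled) rate at which $\omega_\varphi$ decays near $0$. Uniform continuity of $\varphi$ gives, for each $j\in\N$, some $\tau_j\in(0,1]$ with $\omega_\varphi(\tau_j)\leq 2^{-j}$; I would take $(\lambda_k)_k$ increasing with $\lambda_k\geq k/\tau_k$. This forces $\omega_\varphi(s/\lambda_k)\leq 2^{-k}$ for all $s\leq k$ (here $\omega_\varphi$ is finite everywhere since a uniformly continuous map out of a Banach space is coarse). Using $1$-unconditionality of $\mathcal{E}$ one has $\|(y_k)_k\|_{(\oplus Y)_\mathcal{E}}\leq\sum_k\|y_k\|_Y$, so for every $x\in X$,
$$\|\Phi(x)\|_{(\oplus Y)_\mathcal{E}}\leq\sum_k k\,\|\varphi(x/\lambda_k)-\varphi(0)\|_Y\leq\sum_{k\leq\|x\|}k\,\omega_\varphi(\|x\|/\lambda_k)+\sum_{k>\|x\|}k\,2^{-k}<\infty,$$
so $\Phi$ is well defined. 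Applying the same idea to $\|\Phi(x)-\Phi(y)\|_{(\oplus Y)_\mathcal{E}}\leq\sum_k k\,\omega_\varphi(\|x-y\|/\lambda_k)$ — first choosing $N$ with $\sum_{k>N}k2^{-k}$ small so the tail is controlled whenever $\|x-y\|\leq 1$, and then using $\omega_\varphi(0^+)=0$ to bound the finitely many remaining terms for $\|x-y\|$ small — shows $\Phi$ is uniformly continuous, hence coarse.

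Finally, I would check solvency via Proposition~\ref{propsolvent}: it suffices to see $\sup_{t>0}\overline{\rho}_\Phi(t)=\infty$. If $\|x-y\|=\lambda_k$ then $\|x/\lambda_k-y/\lambda_k\|=1$, so the $k$-th coordinate of $\Phi(x)-\Phi(y)$ has $Y$-norm at least $k\delta$; since the norm-one coordinate projection onto $e_k$ does not increase the $E$-norm, $\|\Phi(x)-\Phi(y)\|_{(\oplus Y)_\mathcal{E}}\geq k\delta$, whence $\overline{\rho}_\Phi(\lambda_k)\geq k\delta$ and $\overline{\rho}_\Phi$ is unbounded. Thus $\Phi$ is solvent. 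The main obstacle is exactly the tension in the second step: the weights $k$ must tend to infinity for solvency, yet well-definedness and uniform continuity require the rescaled copies $\varphi(\,\cdot\,/\lambda_k)$ to shrink fast enough to absorb them; since $\varphi$, hence $\omega_\varphi$, is given in advance, one is free to let $\lambda_k$ grow as quickly as needed, and $\lambda_k\geq k/\tau_k$ does the job.
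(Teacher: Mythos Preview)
Your proof is correct and follows essentially the same approach as the paper: both define $\Phi$ coordinatewise as $\big(k\,\varphi(\cdot/\lambda_k)\big)_k$ (the paper writes this as $n\,\varphi(\frac{\varepsilon_n}{n}\cdot)$, which is the same with $\lambda_n=n/\varepsilon_n$), choosing the dilation so that the tail decays geometrically while the weight $k$ forces $\overline{\rho}_\Phi(\lambda_k)\geq k\delta$, then invoke Proposition~\ref{propsolvent}. Your write-up is in fact a bit more careful than the paper's---you normalize $\varphi(0)=0$ and $\|e_k\|_E=1$ and verify well-definedness and the coordinate-projection bound explicitly---but the underlying argument is identical.
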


\begin{proof}
Let $\varphi :X\to Y$ be a uniformly continuous  almost uncollapsed  map. As $\varphi$ is  almost uncollapsed, pick $t>0$  such that $\overline{\rho}_\varphi(t)>0$. As $\varphi$ is uniformly continuous, pick a sequence of positive reals $(\eps_n)_n$ such that

$$\|x-y\|<\eps_n\ \ \Rightarrow \ \ \|\varphi(x)-\varphi(y)\|<\frac{1}{n2^{n}},$$\hfill

\noindent for all $x,y\in X$. 

For each $n\in\N$, let $\Phi_n(x)=n\cdot \varphi\big(\frac{\eps_n}{n}x\big)$, for all $x\in X$. Then, for  $n_0\in\N$, and  $x,y\in X$, with $\|x-y\|\leq n_0$, we have that 

\begin{align*}
\|\Phi_n(x)-\Phi_n(y)\|&= n\cdot \Big\| \varphi\Big(\frac{\eps_n}{n}x\Big)-\varphi\Big(\frac{\eps_n}{n}y\Big)\Big\|\leq \frac{1}{2^n},	
\end{align*}\hfill

\noindent for all $n\geq n_0$. Define $\Phi:X\to (\oplus Y)_\mathcal{E}$ by letting $\Phi(x)=(\Phi_n(x))_n$, for all $x\in X$.  By the above, $\Phi$ is well-define and it is uniformly continuous. Now notice that, if $\|x-y\|=tn/\eps_n$, then $\|\frac{\eps_n}{n}x-\frac{\eps_n}{n}y\|=t$. Hence,  if $\|x-y\|=tn/\eps_n$, we have that

\begin{align*}
\|\Phi(x)-\Phi(y)\|&\geq \|\Phi_n(x)-\Phi_n(y)\| = n\cdot \Big\|\varphi\Big(\frac{\eps_n}{n}x\Big)-\varphi\Big(\frac{\eps_n}{n}y\Big)\Big\|\geq n\cdot\overline{\rho}_\varphi(t). 	
\end{align*}\hfill

\noindent So, as $\overline{\rho}_\varphi(t)>0$, we have that $\lim_n\overline{\rho}_\Phi(tn/\eps_n)= \infty$. By Proposition \ref{propsolvent}, $\Phi$ is solvent.
\end{proof}

\subsection{Type and cotype.}\label{deftype}

Let $X$ be a Banach space and $p\in (1,2]$ (resp. $q\in [2,\infty)$). We say  that $X$ has \emph{type $p$} (resp. \emph{cotype $q$}) if there exists $T>0$ (resp. $C>0$) such that, for all $x_1,\ldots, x_n\in X$,

$$\E_\eps\Big\|\sum_{j=1}^n\eps_jx_j\Big\|^p\leq T^p\sum_{j=1}^n\|x_j\|^p \ \ \Big(\text{resp. }\E_\eps\Big\|\sum_{j=1}^n\eps_jx_j\Big\|^q\geq \frac{1}{C^q}\sum_{j=1}^n\|x_j\|^q\Big),$$\hfill

\noindent where the expectation above is taken with respect to a uniform choice of signs $\eps=(\eps_j)_j\in\{-1,1\}^n$. The smallest $T$ (resp. $C$) for which this holds is denoted $T_p(X)$ (resp. $C_q(X)$). We say that $X$ has \emph{nontrivial type} (resp. \emph{nontrivial cotype}) if $X$ has type $p$, for some $p\in (1,2]$ (resp. if $X$ has cotype $q$, for some $q\in[2,\infty)$).

\subsection{Cocycles.}\label{subsectionEx} By the Mazur-Ulam Theorem (see \cite{MU}), any surjective  isometry $A:Y\to Y$ 	of a Banach space $Y$ is affine, i.e., there exists a surjective linear isometry $T:Y\to Y$, and some $y_0\in Y$, such that $A(y)=T(y)+y_0$, for all $y\in Y$. Therefore, if $G$ is a group, every isometric action $\alpha:G\curvearrowright Y$  of $G$ on the Banach space $Y$ is an affine isometric action, i.e., there exists an isometric linear action $\pi:G\curvearrowright Y$, and a map $b:G\to Y$ such that

\begin{align*}
\alpha_g(y)=\pi_g(y)+b(g),
\end{align*}\hfill

\noindent for all $g\in G$, and all $y\in Y$. The map $b:G\to Y$ is called the \emph{cocycle of $\alpha$}, and it is given by $b(g)=\alpha_g(0)$, for all $g\in G$. As $\alpha$ is an action by isometries, we have that

\begin{align*}
\|b(g)-b(h)\|&=\|\alpha_g(0)-\alpha_{h}(0)\|=\|\alpha_{h^{-1}g}(0)\|=\|b(h^{-1}g)\|
\end{align*}\hfill

\noindent for all $g,h\in G$. Hence, if $G$ is a metric group,  a continuous cocycle $b:G\to Y$ is automatically uniformly continuous.

\begin{remark}
If $(X,\|\cdot\|)$ is a Banach space,  we look at $(X,+)$ as an additive group with a metric given by the norm $\|\cdot\|$. So, we can work with affine isometric actions $\alpha:X\curvearrowright Y$ of the additive group $(X,+)$ on a Banach space $Y$.
\end{remark}

Let $\alpha:G\curvearrowright Y$ be an action by affine isometries. Its cocycle $b$ is called a  \emph{coboundary} if there exists $\xi\in Y$ such that $b(g)=\xi-\pi_g(\xi)$, for all $g\in G$. Clearly, $b$ is a coboundary if and only if $\alpha$ has a fixed point. Also, if $Y$ is reflexive, then $\text{Im}(b)$ is bounded if and only if $b$ is a coboundary. Indeed, if $b$ is a coboundary, it is clear that $\text{Im}(b)$ is bounded. Say $\text{Im}(b)$ is bounded and let $\mathcal{O}$ be an orbit of the action $\alpha$. Then the closed convex hull $\overline{\text{conv}}(\mathcal{O})$ must be bounded, hence weakly compact (as $Y$ is reflexive). Therefore, by Ryll-Nardzewski fixed-point theorem (see \cite{R-N}), there exists $\xi\in Y$ such that $\alpha_g(\xi)=\xi$, for all $g\in G$. So, $b(g)=\xi-\pi_g(\xi)$, for all $g\in G$. 

The discussion above is well-known, and we isolate it in the proposition below.

\begin{prop}\label{coboundary}
Let $G$ be a group and $Y$ be a Banach space. Let $\alpha:G\curvearrowright Y$ be an action by affine isometries with cocycle $b$. Then $b$ is a coboundary if and only if $\alpha$ has a fixed point. Moreover, if $Y$ is reflexive, then $b$ is a coboundary if and only if $b$ is bounded.
\end{prop}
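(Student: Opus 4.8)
The proposition has two parts, and the discussion immediately preceding its statement already contains the essential ideas; the plan is simply to organize them cleanly. For the first equivalence, I would start from the defining relations $\alpha_g(y)=\pi_g(y)+b(g)$ and $b(g)=\alpha_g(0)$. If $b$ is a coboundary, say $b(g)=\xi-\pi_g(\xi)$ for some $\xi\in Y$ and all $g\in G$, then a one-line computation gives $\alpha_g(\xi)=\pi_g(\xi)+b(g)=\pi_g(\xi)+\xi-\pi_g(\xi)=\xi$, so $\xi$ is a fixed point. Conversely, if $\alpha_g(\xi)=\xi$ for all $g$, then $\xi=\pi_g(\xi)+b(g)$, hence $b(g)=\xi-\pi_g(\xi)$, which is exactly the statement that $b$ is a coboundary. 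No reflexivity is needed here.

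For the moreover part, assume $Y$ is reflexive. The forward direction is immediate: if $b(g)=\xi-\pi_g(\xi)$, then since each $\pi_g$ is a linear isometry, $\|b(g)\|\leq\|\xi\|+\|\pi_g(\xi)\|=2\|\xi\|$, so $\mathrm{Im}(b)$ is bounded. For the reverse direction, suppose $\mathrm{Im}(b)$ is bounded. Fix any $y_0\in Y$ and consider its orbit $\mathcal{O}=\{\alpha_g(y_0)\mid g\in G\}$. Writing $\alpha_g(y_0)=\pi_g(y_0)+b(g)$ and using $\|\pi_g(y_0)\|=\|y_0\|$, we see $\mathcal{O}$ is bounded; hence its closed convex hull $K=\overline{\mathrm{conv}}(\mathcal{O})$ is a bounded, closed, convex set, and therefore weakly compact because $Y$ is reflexive. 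Each $\alpha_g$ is an affine isometry which permutes $\mathcal{O}$, and (being the sum of a bounded linear operator and a translation, both weak-to-weak continuous) maps $K$ into $K$. Thus $G$ acts on the weakly compact convex set $K$ by (affine) isometries, and since isometries are trivially noncontracting, the Ryll-Nardzewski fixed-point theorem (see \cite{R-N}) yields a point $\xi\in K$ with $\alpha_g(\xi)=\xi$ for all $g\in G$. By the first part, $b$ is a coboundary.

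The only step requiring any care is the application of Ryll-Nardzewski: one must check that $K$ is $G$-invariant (clear, since $\alpha_g$ is affine and permutes $\mathcal{O}$, so it sends convex combinations of points of $\mathcal{O}$ to convex combinations of points of $\mathcal{O}$, and is continuous so preserves the closure) and that the family $\{\alpha_g|_K\}_{g\in G}$ is a noncontracting (distal) semigroup of affine maps — but this is automatic because $\|\alpha_g(x)-\alpha_g(y)\|=\|x-y\|$ for all $x,y$, so no sequence of group elements can contract a pair of distinct points. With these verifications in place the theorem applies directly and the proof is complete.
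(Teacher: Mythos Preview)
Your proof is correct and follows exactly the same route as the paper's argument preceding the proposition: the first equivalence is a direct computation from $\alpha_g(y)=\pi_g(y)+b(g)$, and the reflexive case is handled by taking the closed convex hull of an orbit, invoking reflexivity to get weak compactness, and applying Ryll-Nardzewski. You simply supply a few more details (the invariance of $K$ and the distality of the action) that the paper leaves implicit.
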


As we are interested in studying the relations between maps which are expanding, solvent,  uncollapsed, and  almost uncollapsed, it is important to know that those are actually different classes of maps. The next proposition shows that there are maps which are both solvent and collapsed (see \cite{E}, Theorem 2.1, for a similar example). In particular, such maps are not expanding. 

\begin{prop}\label{solventcollapsedmaps}
There exists an affine isometric action $\R\curvearrowright \ell_2(\mathbb{C})$ whose cocycle is  Lipschitz, solvent, and collapsed. 
\end{prop}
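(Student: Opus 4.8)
The plan is to construct the action explicitly through a cocycle associated with a unitary representation of $\R$ on $\ell_2(\C)$, using the Fourier-analytic picture: a strongly continuous one-parameter unitary group on an $L_2$-space of a measure on $\R$, together with a cocycle built from multiplication by $e^{is\lambda}-1$. Concretely, I would fix a finite positive Borel measure $\mu$ on $\R$ (which, after choosing an orthonormal basis of $L_2(\mu)$, we identify with $\ell_2(\C)$), let $\pi_s$ be multiplication by the character $\lambda\mapsto e^{is\lambda}$, and set $b(s)(\lambda)=\frac{e^{is\lambda}-1}{?}$ — more precisely $b(s)=(\pi_s-\mathrm{Id})\xi$ for a carefully chosen (non-$L_2$, only formally present) vector, so that $b$ is a genuine cocycle but not a coboundary. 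Then $\|b(s)-b(t)\|^2 = \|b(t-s)\|^2 = \int |e^{i(t-s)\lambda}-1|^2\, d\nu(\lambda)$ for the appropriate measure $\nu$, so the whole problem reduces to choosing $\nu$ (a positive measure on $\R$, finite away from $0$, with a prescribed singularity at $0$) so that the function
$$
\psi(u)\vcentcolon=\Big(\int_{\R} |e^{iu\lambda}-1|^2\, d\nu(\lambda)\Big)^{1/2}
$$
is (a) Lipschitz in $u$, (b) solvent, i.e. $\sup_{u}\psi(u)=\infty$ by Proposition \ref{propsolvent} — wait, I must be careful, solvent for the cocycle means the map $s\mapsto b(s)$ is solvent, which by Proposition \ref{propsolvent} is equivalent to $\sup_{t>0}\overline\rho_b(t)=\infty$, and since $\|b(s)-b(t)\|$ depends only on $|s-t|$ we have $\overline\rho_b(t)=\psi(t)$, so solvency is exactly $\sup_t\psi(t)=\infty$ — and (c) collapsed, i.e. $\psi(t)=0$ for a sequence $t\to\infty$ (or at least $\liminf_{t\to 0^+}$-type behavior forcing $\overline\rho_b(t)=0$ for all $t$; in fact collapsed just means $\rho_b(t)=0$ for all $t>0$, equivalently $\inf\{\|b(s)-b(t)\|: |s-t|\ge \delta\}=0$ for every $\delta$, which via translation invariance means $\psi$ vanishes on a set unbounded in every tail).

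The key computation is the classical identity $|e^{iu\lambda}-1|^2 = 2(1-\cos(u\lambda))$, so $\psi(u)^2 = 2\int (1-\cos(u\lambda))\, d\nu(\lambda)$. I would take $\nu$ to be a discrete measure concentrated on a lacunary-type sequence of frequencies $\lambda_k$ with weights $w_k$, say $d\nu = \sum_k w_k \delta_{\lambda_k}$, so that $\psi(u)^2 = 2\sum_k w_k(1-\cos(u\lambda_k))$. To get the Lipschitz bound I need $\sum_k w_k\lambda_k^2<\infty$ (then $\psi(u)^2\le u^2\sum w_k\lambda_k^2$ near $0$, and one checks Lipschitzness globally using $|(1-\cos a)-(1-\cos b)|\le |a-b|$ combined with boundedness). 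To get unboundedness (solvency) I need $\sum_k w_k=\infty$, so that along suitable $u$ making many cosines near $-1$ simultaneously, $\psi(u)\to\infty$. To get collapsedness I want the $\lambda_k$ to have a common structure — e.g. $\lambda_k = 2\pi/q_k$ for integers $q_k$ with $q_1\mid q_2\mid\cdots$, a highly divisible sequence — so that at $u=q_K$ all terms with $k\le K$ satisfy $\cos(u\lambda_k)=1$; pushing $K\to\infty$ along such a super-divisible sequence forces $\psi(u_K)\to 0$ even though $u_K\to\infty$, which kills $\rho_b(t)$ for every $t$. Balancing these three requirements ($\sum w_k\lambda_k^2<\infty$, $\sum w_k=\infty$, and the divisibility giving collapse) is the crux: pick $\lambda_k\to 0$ fast (so $\lambda_k^2$ is summable against bounded weights) while letting $w_k$ grow just enough to diverge, e.g. $w_k=1$ and $\lambda_k=2\pi/k!$ — then $\sum w_k\lambda_k^2 = 4\pi^2\sum 1/(k!)^2<\infty$, $\sum w_k=\infty$, and $u_K=K!$ gives $\cos(2\pi K!/k!)=1$ for all $k\le K$ so $\psi(K!)^2 = 2\sum_{k>K}(1-\cos(2\pi K!/k!))$, which one must check tends to $0$ (it does, since each such term is $\le (2\pi K!/k!)^2$ and the tail is geometrically small).

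The remaining bookkeeping is to verify that $b(s)=\big((e^{is\lambda_k}-1)\sqrt{w_k}\big)_k$ really is a cocycle for the diagonal unitary representation $\pi_s\big((z_k)_k\big)=\big(e^{is\lambda_k}z_k\big)_k$ on $\ell_2(\C)$ — this is immediate: $b(s+t) = \pi_s b(t) + b(s)$ because $(e^{i(s+t)\lambda_k}-1) = e^{is\lambda_k}(e^{it\lambda_k}-1) + (e^{is\lambda_k}-1)$ — hence $\alpha_s(y)=\pi_s(y)+b(s)$ defines an affine isometric action of $\R$ on $\ell_2(\C)$; that $b(0)=0$ and $b$ is continuous (hence the action is by affine isometries with the stated cocycle); and that the modulus identity $\|b(s)-b(t)\| = \|\alpha_{t-s}(0)\| = \psi(|s-t|)$ holds exactly as in the cocycle discussion preceding the proposition. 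I expect the main obstacle to be the collapse estimate — showing $\psi(u_K)\to 0$ along the super-divisible sequence while simultaneously maintaining $\sup_u\psi(u)=\infty$ — since these pull in opposite directions and require the tail of the series $\sum_{k>K}(1-\cos(2\pi K!/k!))$ to be controlled uniformly; the Lipschitz estimate and the cocycle identity are routine by comparison.
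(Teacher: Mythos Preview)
Your construction is essentially the paper's: the paper takes the diagonal unitary representation $\pi_s(z)_n=e^{is\lambda_n}z_n$ on $\ell_2(\C)$ with frequencies $\lambda_n=2\pi/2^{2^n}$ (rather than your $2\pi/n!$) and weights $w_n\equiv 1$, cocycle $b(s)_n=1-e^{is\lambda_n}$, written as $\alpha_s(x)=w+U_s(x-w)$ with $w=(1,1,\ldots)$. The Lipschitz and collapse verifications run exactly as you outline, with $u_K=2^{2^K}$ playing the role of your $K!$; the tail estimate $\sum_{k>K}(2\pi u_K\lambda_k)^2\to 0$ is carried out explicitly.

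The one place your sketch is thin is the solvency step. Your claim that one can choose $u$ ``making many cosines near $-1$ simultaneously'' is not obviously realizable for factorial (or doubly-exponential) frequencies: the very divisibility that makes collapse work forces most of the earlier cosines to equal $+1$ along any arithmetic sequence you are likely to try, so the direct route does not go through easily. The paper sidesteps any such computation by observing that the only fixed point of $\alpha$ in $\C^{\N}$ is $w=(1,1,\ldots)\notin\ell_2(\C)$; hence by Proposition~\ref{coboundary} (Ryll--Nardzewski in a reflexive target) the cocycle is not a coboundary, so $b$ is unbounded, and then Proposition~\ref{propsolvent} yields solvency. The same argument applies verbatim to your factorial frequencies (the formal fixed point is $(-1,-1,\ldots)\notin\ell_2$), so your construction does work --- just replace the handwave about cosines with this fixed-point observation.
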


\begin{proof}
Define an action $U:\R\curvearrowright \C^\N$ by letting 

$$U_t(x)=\Big(\exp\Big(\frac{2\pi i t}{2^{2^n}}\Big) x_n\Big)_n,$$\hfill

\noindent for all $t\in \R$, and all $x=(x_n)_n\in\C^\N$. Let $w=(1,1,\ldots)\in \C^\N$ and define an action $\alpha:\R\curvearrowright \C^\N$ as $\alpha_t(x)=w+U_t(x-w)$, for all $t\in \R$, and all $x\in\C^\N$. So, 

\begin{align}\label{eqA}
(\alpha_t(x))_m=  \exp\Big(\frac{2\pi i t}{2^{2^m}}\Big)x_m +  1-\exp\Big(\frac{2\pi i t}{2^{2^m}}\Big),
\end{align}\hfill

\noindent for all $t\in \R$,  all $x=(x_n)_n\in\C^\N$, and all $m\in\N$. As $|1-\exp(\theta i)|\leq |\theta|$, for all $\theta\in\R $, it follows that $( 1-\exp(2\pi i t/2^{2^n}))_n\in \ell_2(\C)$, for all $t\in \R$. Hence,   $\alpha_t(x)\in\ell_2(\C)$, for all $ t\in \R$, and all $x\in\ell_2(\C)$. So, $\alpha$ restricts to an action $\alpha:\R\curvearrowright \ell_2(\C)$. By Equation \ref{eqA}, it follows that $\alpha:\R\curvearrowright \ell_2(\C)$ is an affine isometric action. 

Let $b:\R\to \ell_2(\C)$ be the cocycle of $\alpha:\R\curvearrowright \ell_2(\C)$, i.e., $b(t)=\alpha_t(0)$, for all $t\in\R$. Then, an easy induction gives us that $b(t)=w-U_t(w)$, for all $t\in\R$. Let $C=\sum_{n\in\N}\Big(\frac{2\pi}{2^{2^n}}\Big)^2$, then

\begin{align*}
\|b(t)\|^2&=\sum_{n\in\N}\Big|1-\exp\Big(\frac{2\pi i t}{2^{2^n}}\Big)\Big|^2\leq \sum_{n\in\N}\Big(\frac{2\pi t}{2^{2^n}}\Big)^2=C|t|^2,
\end{align*}\hfill

\noindent for all $t\in\R$. So, $b$ is Lipschitz. 

For $t\neq 0$, $0\in \C^\R$ is the only fixed point of $U_t$. Hence,  $w$ is the only fixed point of $\alpha_t$. So, as $w\not\in \ell_2(\C)$, $\alpha:\R\curvearrowright \ell_2(\C) $ has no fixed points. Therefore, $b$ is unbounded (see Proposition \ref{coboundary}). By Proposition \ref{propsolvent}, $b$ is solvent. 

Pick $L>0$ such that $Ls\leq 2^s-1$, for all $s\in\N$. If $k\in\N$ is large enough, say ${2\pi }/{ 2^{2^kL}}<1$, we have that

\begin{align*}
\|b(2^{2^k})\|^2&=\sum_{n>k}\Big| 1-\exp\Big(\frac{2\pi i 2^{2^k}}{2^{2^n}}\Big)\Big|^2\leq \sum_{n>k}\Big(\frac{2\pi 2^{2^k}}{2^{2^n}}\Big)^2\\
&= \sum_{s\in\N}\Big(\frac{2\pi }{ 2^{2^k(2^{s}-1)}}\Big)^2\leq \sum_{s\in\N}\Big(\frac{2\pi }{ 2^{2^kLs}}\Big)^2\\
&\leq \frac{2\pi}{2^{2^kL}-1}.
\end{align*}\hfill

\noindent Hence, $\|b(2^{2^k})\|\to 0$, as $k\to \infty$. So, $b$ is collapsed.
\end{proof}

\begin{problem}
 Is there a map $X\to Y$ which is collapsed,  almost uncollapsed and bounded  (in particular not solvent), for some Banach spaces $X$ and $Y$?
\end{problem}

\section{Preservation of cotype.}\label{sectioncotype}

Mendel and Naor solved in \cite{MN} the long standing problem in Banach space theory  of giving a completely metric definition for the cotype of a Banach space. As a subproduct of this, they have shown that if a Banach space $X$ coarsely (resp. uniformly) embeds into a Banach space $Y$ with nontrivial type, then $q_X\leq q_Y$ (see \cite{MN}, Theorem 1.9 and Theorem 1.11).  In this section we prove Theorem \ref{solventemb}, which  shows that the hypothesis on the embedding $X\hookrightarrow Y$ can be weakened. 

For every $m\in\N$, we denote by $\Z_m$ the set of integers modulo $m$. For every $n,m\in\N$, we denote  the normalized counting measure on $\Z^n_m$ by $\mu=\mu_{n,m}$, and  the normalized counting measure on $\{-1,0,1\}^n$ by $\sigma=\sigma_n$.

\begin{defi}\textbf{(Metric cotype)} Let $(M,d)$ be a metric space and $q,\Gamma>0$. We say that $(M,d)$ has \emph{metric cotype $q$ with constant $\Gamma$} if, for all $n\in\N$, there exists an even integer $m$, such that, for all $f:\Z^n_m\to M$,

\begin{align}\label{metriccotypedef}
\sum_{j=1}^n\int_{\Z^n_m}d\Big(f\big(x&+\frac{m}{2}e_j\big),f(x)\Big)^qd\mu(x)\\
& \leq\Gamma^qm^q\int_{\{-1,0,1\}^n}\int_{\Z^n_m}d\big(f(x+\eps),f(x)\big)^qd\mu(x)d\sigma(\eps).\nonumber
\end{align}\hfill

\noindent  The infimum of the constants $\Gamma$ for which $(M,d)$ has metric cotype $q$ with constant $\Gamma$ is denoted by $\Gamma_q(M)$. Given $n\in\N$ and $\Gamma>0$, we define $m_q(M,n,\Gamma)$ as the smallest even integer $m$ such that Inequality \ref{metriccotypedef} holds, for all $f:\Z^n_m\to M$. If no such $m$ exists we set $m_q(M,n,\Gamma)=\infty$.
\end{defi}

The following is the main theorem of \cite{MN}. Although we will not use this result in these notes, we believe it is worth mentioning.

\begin{thm}\textbf{(Mendel and Naor, 2008)}
Let $X$ be a Banach space and $q\in[2,\infty)$. Then $X$ has metric cotype $q$ if and only if $X$ has cotype $q$. Moreover, 

$$\frac{1}{2\pi}C_q(X)\leq \Gamma_q(X)\leq 90 C_q(X),$$\hfill

\noindent where $C_q(X)$ is the $q$-cotype constant of $X$.
\end{thm}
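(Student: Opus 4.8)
The plan is to prove the two inequalities separately: the lower bound $\frac{1}{2\pi}C_q(X)\leq \Gamma_q(X)$, which says that metric cotype forces linear cotype, and the upper bound $\Gamma_q(X)\leq 90\,C_q(X)$, which says that linear cotype forces metric cotype. The first is the ``easy direction,'' obtained by testing Inequality \ref{metriccotypedef} against explicit functions; the second is the substantive half and requires harmonic analysis on the discrete torus $\Z^n_m$.

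For the lower bound, given $v_1,\dots,v_n\in X$ I would apply \ref{metriccotypedef} to the function $f\colon \Z^n_m\to X_{\C}$ given by $f(x)=\sum_{j=1}^n \exp(2\pi i x_j/m)\,v_j$, passing to the complexification (or, to stay inside $X$, splitting into the real sine and cosine components). Shifting the $j$-th coordinate by $m/2$ multiplies the $j$-th exponential by $-1$, so the left-hand side of \ref{metriccotypedef} collapses to $2^q\sum_j\|v_j\|^q$. On the right-hand side, $\exp(2\pi i(x_j+\eps_j)/m)-\exp(2\pi i x_j/m)$ has modulus $2\sin(\pi/m)\leq 2\pi/m$ when $\eps_j=\pm1$ and vanishes when $\eps_j=0$; after averaging the phases over $x$ against $\mu$ and the signs over $\eps\in\{-1,0,1\}^n$ against $\sigma$, the factor $m^q$ in \ref{metriccotypedef} cancels the $(1/m)^q$ and what survives is a Rademacher-type average of the form $\E_\eps\|\sum_j\eps_j v_j\|^q$. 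Feeding this through, and accounting for the constant $2\pi$ lost in $2\sin(\pi/m)\leq 2\pi/m$, yields the linear cotype inequality with constant at most $2\pi\,\Gamma_q(X)$, i.e. $C_q(X)\leq 2\pi\,\Gamma_q(X)$.

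The upper bound is the heart of the theorem. I would aim for the vector-valued inequality
$$\Big(\sum_{j=1}^n\E_x\big\|f(x+\tfrac m2 e_j)-f(x)\big\|^q\Big)^{1/q}\ \lesssim\ C_q(X)\,m\,\Big(\E_\eps\E_x\|f(x+\eps)-f(x)\|^q\Big)^{1/q},$$
valid for a suitable even $m=m_q(X,n,\Gamma)$, and then track constants to reach $90$. The idea is to compare the ``long'' differences $f(x+\tfrac m2 e_j)-f(x)$ to the ``short'' increments through the discrete partial derivatives $\partial_j f(x)=f(x+e_j)-f(x)$: treating $\sum_j\eps_j\,\partial_j f(x)$ as a first-order model of $f(x+\eps)-f(x)$, the linear cotype inequality applied pointwise in $x$ to the vectors $\{\partial_j f(x)\}_j$ lower-bounds the right-hand average by $C_q(X)^{-q}\sum_j\E_x\|\partial_j f(x)\|^q$. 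One then telescopes the long difference along the $j$-th axis and uses Fourier analysis on $\Z^n_m$, where shifting by $\tfrac m2 e_j$ acts on $\hat f(k)$ through the multiplier $(-1)^{k_j}-1$, together with a convolution/smoothing step to absorb the error between $f(x+\eps)-f(x)$ and its first-order model and to control the Rademacher projection on the frequency side.

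The hardest step, and the one that dictates the size of $m$, is precisely this passage from short increments to long differences with a loss independent of $n$: bounding the approximation error demands a quantitative smoothing estimate on $\Z^n_m$, and making the Fourier multiplier argument efficient requires a bound on the norm of the Rademacher (K-convexity) projection, i.e. nontrivial type of $X$. For this reason the clean polynomial control on $m_q(X,n,\Gamma)$ is obtained first under the hypothesis that $X$ has nontrivial type. To reach the fully general statement with the universal constant $90$, I would then remove the type assumption by Pisier's dichotomy, treating separately the case where $X$ fails nontrivial type and hence contains $\ell_1^n$'s almost isometrically; this separate argument enlarges the admissible $m$ but leaves the constant $\Gamma_q(X)\leq 90\,C_q(X)$ intact.
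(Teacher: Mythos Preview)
The paper does not prove this theorem. It is quoted verbatim as ``the main theorem of \cite{MN}'' and is immediately followed by the remark ``Although we will not use this result in these notes, we believe it is worth mentioning.'' There is therefore no proof in the paper against which to compare your proposal; the statement is included purely for context before the paper invokes the quantitative bound $m_q(Y,n,\Gamma)=O(n^{1/q})$ from \cite{MN}, Theorem 4.1, and the lower bound from \cite{MN}, Lemma 2.3, in the proof of Theorem \ref{solventemb}.

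For what it is worth, your sketch of the lower bound is essentially the argument in \cite{MN} (this is their Lemma 2.3, and is exactly the computation behind Lemma \ref{lemmasolvent} here). Your outline of the upper bound captures the broad strokes of the Mendel--Naor approach---Fourier analysis on $\Z_m^n$, comparing long differences to short increments, invoking cotype pointwise, and using K-convexity to control the Rademacher projection---but the removal of the type hypothesis is not done via ``Pisier's dichotomy'' in the way you suggest. Mendel and Naor give a separate, direct combinatorial/smoothing argument for general spaces with cotype $q$ that yields a worse (but still finite) dependence of $m$ on $n$ while preserving the constant $90$; the type hypothesis is what buys the sharp $m_q(X,n,\Gamma)=O(n^{1/q})$, which is precisely the input this paper uses.
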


We start by proving a simple property of solvent maps. 

\begin{prop}\label{ltrivial}
Let $(M,d)$ and $(N,\partial)$ be metric spaces, $\varphi:M\to N$ be a solvent map, and $S>0$. If $[a_n,b_n]_n$ is a sequence of intervals of the real line such that $\lim_n a_n=\infty$, $b_n-a_n<S$ and $a_{n+1}-a_n<S$, for all $n\in\N$, then, we must have

$$\sup_n\inf\{\overline{\rho}_\varphi(t)\mid t\in [a_n,b_n]\}=\infty.$$\hfill
\end{prop}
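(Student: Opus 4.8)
The plan is to peel off the definition of the exact compression modulus and reduce the claim to the following: for every $K>0$ there is an index $n_0$ such that $\partial(\varphi(x),\varphi(y))\geq K$ whenever $d(x,y)\in[a_{n_0},b_{n_0}]$ (equivalently, $\overline{\rho}_\varphi(t)\geq K$ for all $t\in[a_{n_0},b_{n_0}]$, so that the inner infimum corresponding to $n_0$ is at least $K$). Since solvency of $\varphi$ says precisely that on suitable ``windows'' $[R,R+m]$, $m\in\N$, one has $\partial(\varphi(x),\varphi(y))>m$ whenever $d(x,y)\in[R,R+m]$, the whole task reduces to trapping one of the given intervals $[a_n,b_n]$ inside such a window of length $\geq K$.

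So I would fix $K>0$, choose an integer $m\geq\max\{K,\,2S,\,a_1+S\}$, and apply solvency to $m$ to obtain $R>0$ with $d(x,y)\in[R,R+m]\implies\partial(\varphi(x),\varphi(y))>m$. The key elementary observation is that the hypotheses $a_{n+1}-a_n<S$ and $\lim_n a_n=\infty$ make $\{a_n\}_n$ ``$S$-dense above $a_1$'': for any real $t\geq a_1$ the set $\{j\in\N:a_j\leq t\}$ is nonempty (it contains $1$) and finite (since $a_j\to\infty$), so $n_0:=\max\{j:a_j\leq t\}$ is well defined and satisfies $a_{n_0}\leq t<a_{n_0+1}<a_{n_0}+S$, i.e. $a_{n_0}\in(t-S,t]$. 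Applying this to $t^{\ast}:=R+m-S$ — which is $\geq a_1$ because $R>0$ and $m\geq a_1+S$ — produces $n_0$ with $a_{n_0}\in(R+m-2S,\,R+m-S]$. Since $m\geq 2S$ this forces $a_{n_0}\geq R$, and since $b_{n_0}<a_{n_0}+S\leq R+m$ we conclude $[a_{n_0},b_{n_0}]\subseteq[R,R+m]$.

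To finish, take any $t\in[a_{n_0},b_{n_0}]$ and any $x,y\in M$ with $d(x,y)=t$; then $d(x,y)\in[R,R+m]$, so $\partial(\varphi(x),\varphi(y))>m\geq K$. Hence $\overline{\rho}_\varphi(t)\geq m$ for every $t\in[a_{n_0},b_{n_0}]$, so $\inf\{\overline{\rho}_\varphi(t):t\in[a_{n_0},b_{n_0}]\}\geq m\geq K$, and since $K>0$ was arbitrary the supremum over $n$ is $\infty$.

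The step I expect to require the most care is the trapping argument, precisely because solvency gives no control over the position $R$ of the window: a priori $R$ could be tiny while the whole sequence $(a_n)$ lives far to the right, or vice versa. The resolution is not to move the window but to enlarge $m$: taking $m$ larger than both $2S$ and $a_1+S$ forces the right-hand part $[R,R+m-S]$ of the window to reach past $a_1$ and to be long enough ($>S$) that $S$-density catches some $a_{n_0}$ in it, with the extra room guaranteeing $a_{n_0}$ does not slip below $R$. It is worth noting that, unlike Proposition~\ref{propsolvent}, this argument uses only the bare definition of a solvent map — neither coarseness nor any linear structure on $M$ is needed.
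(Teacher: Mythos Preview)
Your proof is correct and follows essentially the same approach as the paper: you both choose an integer $m\geq\max\{K,2S,a_1+S\}$ (the paper calls it $N$) so that the solvent window $[R,R+m]$ is long enough, relative to the spacing bound $S$ and the starting point $a_1$, to guarantee some $[a_n,b_n]$ lands inside it. The only cosmetic difference is which index you select --- you pick $n_0$ with $a_{n_0}$ near the right end of the window via $t^\ast=R+m-S$, while the paper picks the first index with $a_n\geq R$ --- but the trapping argument and the resulting bound $\inf_{t\in[a_{n_0},b_{n_0}]}\overline{\rho}_\varphi(t)\geq m\geq K$ are identical.
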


\begin{proof}
Let $k>0$. Pick $N\in\N$ so that $N\geq \{a_1+S, k, 2S\}$, and let $R\geq 0$ be such that $d(x,y)\in [R,R+N]$ implies $\partial(f(x),f(y))>N$, for all $x,y\in M$. Then there exists $n\in\N$ such that $[a_n,b_n]\subset [R,R+N]$. Indeed, if $a_1< R$ let $n=\max \{j\in\N\mid a_j<R\}+1$, and if $R\leq a_1$ let $n=1$. Hence, 

$$\inf\{\overline{\rho}_\varphi(t)\mid t\in [a_n,b_n]\}\geq \inf\{\overline{\rho}_\varphi(t)\mid t\in [R,R+N]\}\geq N\geq k.$$\hfill

\noindent As $k$ was chosen arbitrarily, we are done.
\end{proof}

The following lemma is a version of Lemma 7.1 of \cite{MN} in the context of the modulus $\overline{\rho}$ instead of $\rho$. It's proof is analogous to the proof of Lemma 7.1 of \cite{MN} but we include it here for completeness. Let $n\in\N$ and  $r\in [1,\infty]$. In what follows,  $\ell_r^n(\C)$ denotes the complex Banach space $(\C^n,\|\cdot\|_r)$, where $\|\cdot\|_r$ denotes the $\ell_r$-norm in $\C^n$.

\begin{lemma}\label{lemmasolvent}
Let $(M,d)$ be a metric space, $n\in\N$, $q,\Gamma>0$, and $r\in [1,\infty]$. Then,  for every map $f:\ell^n_r(\C)\to M$, and every $s>0$, we have

$$n^{1/q}\overline{\rho}_f(2s)\leq \Gamma \cdot m_q(M,n,\Gamma)\cdot\omega_f\left(\frac{2\pi sn^{1/r}}{m_q(M,n,\Gamma)}\right)$$\hfill

\noindent (if $r=\infty$, we use the notation $1/r=0$). 
\end{lemma}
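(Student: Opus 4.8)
The plan is to feed a suitable $m$-periodic test map into the metric cotype inequality for $M$. Set $m=m_q(M,n,\Gamma)$ and assume first that $m<\infty$ (the case $m=\infty$ is trivial once one adopts the convention that the right-hand side is then $+\infty$). I would define $g:\Z_m^n\to\ell_r^n(\C)$ by
\[
g(x)=s\bigl(e^{2\pi i x_1/m},\dots,e^{2\pi i x_n/m}\bigr),\qquad x=(x_1,\dots,x_n)\in\Z_m^n,
\]
and put $F=f\circ g:\Z_m^n\to M$. Since $F$ is an arbitrary map out of $\Z_m^n$ and $m=m_q(M,n,\Gamma)$ is the smallest even integer for which \eqref{metriccotypedef} holds for every map $\Z_m^n\to M$, that inequality applies to $F$.

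The two displacements occurring in \eqref{metriccotypedef} interact with $g$ as follows. Shifting the $j$-th coordinate by $m/2$ multiplies the $j$-th entry of $g(x)$ by $e^{\pi i}=-1$ and changes nothing else, so $\bigl\|g(x+\tfrac m2 e_j)-g(x)\bigr\|_r=2s$ \emph{exactly}, for every $x$, every $j$, and every $r\in[1,\infty]$; hence $d\bigl(F(x+\tfrac m2 e_j),F(x)\bigr)\ge\overline{\rho}_f(2s)$. On the other hand, for $\eps\in\{-1,0,1\}^n$ the $j$-th entry of $g(x+\eps)-g(x)$ equals $s\,e^{2\pi i x_j/m}\bigl(e^{2\pi i \eps_j/m}-1\bigr)$, which vanishes when $\eps_j=0$ and has modulus at most $2\pi s/m$ otherwise (using $|e^{i\theta}-1|\le|\theta|$); summing over the at most $n$ nonzero coordinates gives $\|g(x+\eps)-g(x)\|_r\le 2\pi s\,n^{1/r}/m$ (read $n^{1/r}=1$ when $r=\infty$), whence $d\bigl(F(x+\eps),F(x)\bigr)\le\omega_f\bigl(2\pi s\,n^{1/r}/m\bigr)$ by monotonicity of $\omega_f$.

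Substituting these two estimates into \eqref{metriccotypedef}, the left-hand side is at least $n\,\overline{\rho}_f(2s)^q$, while the right-hand side, being $\Gamma^q m^q$ times the $\mu\otimes\sigma$-average of a quantity bounded above by $\omega_f\bigl(2\pi s\,n^{1/r}/m\bigr)^q$, is at most $\Gamma^q m^q\,\omega_f\bigl(2\pi s\,n^{1/r}/m\bigr)^q$. Taking $q$-th roots yields $n^{1/q}\overline{\rho}_f(2s)\le\Gamma\,m\,\omega_f\bigl(2\pi s\,n^{1/r}/m\bigr)$, which is exactly the asserted bound. This is modeled on the proof of Lemma 7.1 of \cite{MN}; I do not expect a genuine obstacle, the only point needing attention being the use of the \emph{exact} distance $2s$ between $g(x+\tfrac m2 e_j)$ and $g(x)$ — this is precisely what forces $\overline{\rho}_f$ rather than $\rho_f$ — together with the observation that this distance is independent of the parameter $r$, which is immediate since only one coordinate is altered.
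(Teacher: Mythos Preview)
Your proof is correct and is essentially identical to the paper's: you define the same test map $\Z_m^n\to\ell_r^n(\C)$ (the paper calls it $h$, you call it $g$), compose with $f$, and plug into \eqref{metriccotypedef} using the same two estimates. Your additional remarks on the $m=\infty$ case and on why the \emph{exact} value $2s$ forces $\overline{\rho}_f$ rather than $\rho_f$ are welcome clarifications but do not depart from the paper's argument.
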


\begin{proof}
In order to simplify notation, let $m=m_q(M,n,\Gamma)$ and assume $r<\infty$ (if $r=\infty$, the same proof holds with the $\ell_r$-norm substituted by the max-norm below). Let $e_1,\ldots,e_n$ be the standard basis of $\ell_r^n(\C)$. Let $h:\Z^n_m\to \ell^n_r(\C)$ be given by 

$$h(x)=s\cdot\sum_{j=1}^ne^{\frac{2\pi i x_j}{m}}e_j,$$\hfill
 
\noindent for all $x=(x_j)_{j}\in \Z^n_m$, and define $g:\Z^n_m\to M$ by letting $g(x)=f(h(x))$, for all $x=(x_j)_{j}\in \Z^n_m$. Then, as 

\begin{align*}
d(g(x+\eps),g(x))\leq \omega_f\Big(s\big(\sum_{j=1}^n|e^{\frac{2\pi i\eps_j}{m}}-1|^r\big)^{1/r}\Big)\leq \omega_f\Big(\frac{2\pi s n^{1/r}}{m}\Big),
\end{align*}\hfill

\noindent for all $\eps=(\eps_j)_j\in \{-1,0,1\}^n$ and all $x=(x_j)_j\in \Z^n_m$, we must have 

\begin{align*}
\int_{\{-1,0,1\}^n}\int_{\Z^n_m}d\big(g(x+\eps),g(x)\big)^qd\mu(x)d\sigma(\eps)\leq  \omega_f\Big(\frac{2\pi s n^{1/r}}{m}\Big)^q.
\end{align*}\hfill

Also, as $\|h(x+\frac{m}{2}e_j)-h(x)\|=2s$, for all $x\in \Z^n_m$, and all $j\in\{1,\ldots,n\}$, we have that $d(g(x+\frac{m}{2}e_j),g(x))\geq \overline{\rho}_f(2s)$, for all $x\in \Z^n_m$, and all $j\in\{1,\ldots,n\}$. Hence, 

$$\sum_{j=1}^n\int_{\Z^n_m}d\Big(g(x+\frac{m}{2}e_j),g(x)\Big)^qd\mu(x)\geq n \overline{\rho}_f(2s)^q.$$\hfill

\noindent Therefore, by the definition of $m_q(M,n,\Gamma)$, we conclude that

$$n\overline{\rho}_f(2s)^q\leq \Gamma^q m^q\omega_f\Big(\frac{2\pi s n^{1/r}}{m}\Big)^q.$$\hfill

\noindent Raising both sides to the $(1/q)$-th power, we are done. 
\end{proof}
  
  We can now prove the main result of this section.

\begin{proof}[Proof of Theorem \ref{solventemb}]
First, let us notice that we only need to prove the case in which $\varphi$ is coarse and solvent. Indeed, let $\varphi:X\to Y$ be a uniformly continuous  almost uncollapsed  map, then $X$ maps into $\ell_2(Y)$ by a uniformly continuous  solvent map (see Proposition \ref{Rosendal}). As $p_{\ell_2(Y)}=p_Y$ and $q_{\ell_2(Y)}=q_Y$, there is no loss of generality if we assume that $\varphi$ is solvent.

If $q_Y=\infty$ we are done, so  assume $q_Y<\infty$. Suppose $q_X>q_Y$. Pick $q\in(q_Y, q_X)$ such that $1/q-1/q_X<1$, and let $\alpha=1/q-1/q_X$ (if $q_X=\infty$, we use the notation $1/q_X=0)$. 

Let $(\eps_n)_n$ be a sequence in $(0,1)$  such that $(1+\eps_n)n^\alpha\leq n^\alpha+1$, for all $n\in\N$. By Maurey-Pisier Theorem (see \cite{MP}), $\ell_{q_X}$ is finitely representable in $X$. Considering $\ell_p(\C)$ as a real Banach space, we have that $\ell_p(\C)$ is finitely representable in $\ell_p$, so $\ell_p(\C)$  is finitely representable in $X$. Therefore, looking at $\ell_p^n(\C)$ as real Banach spaces,   we can pick a (real) isomorphic embedding $f_n:\ell_{q_X}^n(\C)\to X$  such that $\|x\|\leq \|f_n(x)\|\leq (1+\eps_n)\|x\|$,  for all $x\in\ell_{q_X}^n(\C)$. For each $n\in\N$, let $g_n=\varphi\circ f_n$. Hence, 

\begin{align*}
\overline{\rho}_{g_n}(t)&=\inf\{\|\varphi(f_n(x))-\varphi(f_n(y))\|\mid \|x-y\|=t\}\\
&\geq  \inf\{\overline{\rho}_\varphi(\|f_n(x)-f_n(y)\|)\mid \|x-y\|=t\}\\
&\geq \inf\{\overline{\rho}_\varphi (a)\mid a\in[t,(1+\eps_n)t]\},
\end{align*}\hfill

\noindent for all $n\in\N$, and all $t\in[0,\infty)$. Also,  as $\eps_n\in (0,1)$, we have that  $\omega_{g_n}(t)\leq \omega_\varphi(2t)$,  for all $n\in\N$, and all $t\in[0,\infty)$. 

 As $Y$ has nontrivial type and as $q>q_Y$,  Theorem 4.1 of \cite{MN} gives us that, for some $\Gamma>0$,  $m_q(Y,n,\Gamma)=O(n^{1/q})$. Therefore, there exists $A>0$ and $n_0\in\N$ such that $m_q(Y,n,\Gamma)\leq An^{1/q}$, for all $n>n_0$. On the other hand, by Lemma 2.3 of \cite{MN},  $m_q(Y,n,\Gamma)\geq n^{1/q}/\Gamma$, for all $n\in\N$. Hence,  applying Lemma \ref{lemmasolvent} with $s=n^{\alpha}$ and $r=q_X$, we get that, for all $n>n_0$,

$$\inf\{\overline{\rho}_{\varphi}(2a)\mid a\in[2n^{\alpha},2n^{\alpha}+2]\}\leq \Gamma  A \omega_{\varphi}\left(4\pi \Gamma\right).$$\hfill

\noindent As $\alpha<1$, we have that $\sup_n|(n+1)^\alpha-n^\alpha|<\infty$. Therefore, by Proposition \ref{ltrivial}, the supremum over $n$ of the left hand side above is infinite. As $\varphi$ is coarse, this gives us a contradiction. 
\end{proof}

\begin{proof}[Proof of Corollary \ref{funnyOS}]
If $p>1$, this follows straight from Theorem \ref{solventemb},  the fact that $q_{\ell_p}=\max\{2,p\}$ and that $\ell_p$ has nontrivial type. If $p=1$, let $g:\ell_1\to \ell_2$ be a uniform embedding (see \cite{No}, Theorem 5). Then the conclusion of the corollary must hold for the map $g\circ f:\ell_q\to \ell_2$, which implies that it holds for $f$ as well.
\end{proof}

\section{Property $\mathcal{Q}$.}\label{subsectionPropQ}

For each $k\in\N$, let $\mathcal{P}_k(\N)$ denote the set of all subset of $\N$ with exactly $k$ elements. If $\bar{n}\in \mathcal{P}_k(\N)$, we always write $\bar{n}=\{n_1,\ldots,n_k\}$ in increasing order, i.e., $n_1<\ldots<n_k$. We make $\mathcal{P}_k(\N)$ into a graph by saying that two distinct elements $\bar{n}=\{n_1,\ldots,n_k\},\bar{m}=\{m_1,\ldots,m_k\}\in \mathcal{P}_k(\N)$ are connected if they interlace, i.e., if either

$$n_1\leq m_1\leq n_2\leq \ldots\leq n_k\leq m_k\ \ \text{ or }\ \  
 m_1\leq n_1\leq m_2\leq  \ldots m_k\leq n_k.$$\hfill

\noindent  We write $\bar{n}<\bar{m}$ if $n_k<m_1$. We endow $\mathcal{P}_k(\N)$ with the shortest path metric. So, the diameter of $\mathcal{P}_k(\N)$ equals $k$. 

Kalton introduced the following property for metric spaces in \cite{K}, Section 4.  For $\varepsilon,\delta>0$, a metric space $(M,d)$ is said to have \emph{Property $\mathcal{Q}(\varepsilon,\delta)$} if for all $k\in\N$,  and all $f:\mathcal{P}_k(\N)\to M$ with  $\omega_f(1)\leq \delta$, there exists an infinite subset $\M\subset\N$ such that 
 
$$d(f(\bar{n}),f(\bar{m}))\leq \varepsilon,\ \ \text{ for all }\ \ \bar{n}<\bar{m}\in \mathcal{P}_k(\M).$$\hfill

\noindent For each $\varepsilon>0$, we define $\Delta_M(\varepsilon)$ as the supremum of all $\delta>0$ so that $(M,d)$ has Property $\cQ(\varepsilon,\delta)$. For a Banach space $X$, it is clear that there exists $Q_X\geq 0$ such that $\Delta_X(\varepsilon)=Q_X\varepsilon$, for all $\varepsilon>0$. The Banach space $X$ is said to have \emph{Property $\cQ$} if $Q_X>0$. 
 
\begin{theorem}\label{PropertyQ}
Let $X$ and $Y$ be Banach spaces, and assume that $Y$ has Property $\mathcal{Q}$. If either

\begin{enumerate}[(i)]
\item there exists a coarse solvent map $X\to Y$, or
\item  there exists a uniformly continuous  map $\varphi: B_X\to Y$  such that $\overline{\rho}_\varphi(t)>0$, for some $t\in (0,1)$, 
\end{enumerate}

\noindent then, $X$ has Property $\mathcal{Q}$. In particular, if there exists a uniformly continuous almost uncollapsed  map $X\to Y$, then, $X$ has Property $\mathcal{Q}$.
\end{theorem}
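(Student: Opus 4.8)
The plan is to prove statement (i) first, since statement (ii) for $\varphi:B_X\to Y$ reduces to it once we know how to globalize, and the final "in particular" then follows from the reduction in Proposition~\ref{Rosendal} (a uniformly continuous almost uncollapsed map $X\to Y$ yields a uniformly continuous solvent map $X\to(\oplus Y)_{\mathcal E}$; since $(\oplus Y)_{\mathcal E}$ inherits Property $\mathcal Q$ from $Y$ for suitable $\mathcal E$—or one simply takes $\mathcal E$ to be the $\ell_2$-basis and checks $\ell_2(Y)$ has Property $\mathcal Q$—and a uniformly continuous map between Banach spaces is automatically coarse, case (i) applies). So the heart of the matter is case (i).

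For case (i), suppose $\varphi:X\to Y$ is coarse and solvent, and fix $\varepsilon>0$ and $k\in\N$; we must produce $\delta>0$ such that every $f:\mathcal P_k(\N)\to X$ with $\omega_f(1)\le\delta$ admits an infinite $\M\subset\N$ with $d(f(\bar n),f(\bar m))\le\varepsilon$ for all $\bar n<\bar m\in\mathcal P_k(\M)$. The idea is to feed $\varphi\circ f$ into Property $\mathcal Q$ of $Y$. First, by the normalization recalled in Subsection~\ref{SubsectionEmb}, we may assume $\|x-y\|\ge1\Rightarrow\|\varphi(x)-\varphi(y)\|\le\|x-y\|$, so $\varphi$ is Lipschitz for large distances; combined with coarseness this gives a uniform bound $\omega_\varphi(t)\le Lt+L$ for all $t$. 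Now for the solvent side: since $\varphi$ is solvent, by Proposition~\ref{propsolvent} $\sup_{t>0}\overline\rho_\varphi(t)=\infty$, so we can pick $t_0>0$ with $\overline\rho_\varphi(t_0)$ as large as we like—in particular larger than $\varepsilon$ (here $\varepsilon$ plays the role of the target compression bound in Property $\mathcal Q$ for $X$). The trick, as in Kalton's arguments, is that the metric $d$ on $\mathcal P_k(\N)$ is bounded (diameter $k$), and we will rescale $f$: replace $f$ by $\tilde f=R\cdot f$ for a large dilation factor $R$ so that $\tilde f$ takes values in $X$ with the property that interlacing pairs (distance $1$ in $\mathcal P_k(\N)$) map to points at controlled distance, while the ambient scale is pushed into the range where $\overline\rho_\varphi$ is large. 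Precisely, scaling $f$ by $R$ scales $\omega_f(1)\le\delta$ to $\omega_{Rf}(1)\le R\delta$; choosing $\delta=\delta(\varepsilon,k)$ small relative to $1/R$ keeps $\omega_{\varphi\circ(Rf)}(1)\le\omega_\varphi(R\delta)$ below the threshold $\Delta_Y(\varepsilon')$ that triggers Property $\mathcal Q(\varepsilon',\cdot)$ of $Y$, where $\varepsilon'$ is chosen $<\overline\rho_\varphi(t_0)$. Applying Property $\mathcal Q$ of $Y$ to $\varphi\circ(Rf)$ yields an infinite $\M$ with $\|\varphi(Rf(\bar n))-\varphi(Rf(\bar m))\|\le\varepsilon'<\overline\rho_\varphi(t_0)$ for all $\bar n<\bar m\in\mathcal P_k(\M)$; by definition of $\overline\rho_\varphi$ this forces $\|Rf(\bar n)-Rf(\bar m)\|\ne t_0$, and more is needed—one wants to conclude $\|f(\bar n)-f(\bar m)\|\le\varepsilon$. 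The way to extract this is to run the argument not with a single $t_0$ but with the interval structure: solvency (Proposition~\ref{ltrivial}-style reasoning) says $\overline\rho_\varphi$ is "large on long intervals arbitrarily far out," so for a suitable window $[a,a+C]$ with $a$ huge we have $\inf\{\overline\rho_\varphi(s):s\in[a,a+C]\}>\varepsilon'$; picking $R$ so that the possible values $\|Rf(\bar n)-Rf(\bar m)\|$ for $\bar n<\bar m$ at combinatorial distance $\le k$ land inside such a window unless $\|f(\bar n)-f(\bar m)\|$ is already $\le\varepsilon$, the conclusion of Property $\mathcal Q$ for $Y$ then propagates back to $X$.

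For case (ii), the only difference is that $\varphi$ is defined on $B_X$ and $\overline\rho_\varphi(t)>0$ for some $t\in(0,1)$; here the rescaling must be arranged so that the relevant points $Rf(\bar n)$ stay inside $B_X$ after an affine normalization of $f$ (translate $f$ so its image contains $0$ and scale down by the diameter of $\operatorname{im}f$, which is finite since $\mathcal P_k(\N)$ has diameter $k$ and $\omega_f$ is finite on bounded sets), and the target compression scale is now $t\in(0,1)$ rather than a large $t_0$; the same threshold bookkeeping with Property $\mathcal Q$ of $Y$ goes through, using $\omega_\varphi$ on $B_X$ in place of the global $\omega_\varphi$.

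The main obstacle I expect is the bookkeeping in the coarse–solvent case: matching up the three scales—the combinatorial scale on $\mathcal P_k(\N)$ (diameter $k$), the dilation factor $R$ bridging to the "far out, long interval" region where $\overline\rho_\varphi$ is large, and the threshold $\delta$ controlling $\omega_{\varphi\circ Rf}(1)$ below $\Delta_Y(\varepsilon')$—so that the output of Property $\mathcal Q$ of $Y$ genuinely pulls back to the desired inequality $d(f(\bar n),f(\bar m))\le\varepsilon$ in $X$, rather than merely to the weaker statement that one specific distance value is avoided. Getting the quantifiers on $\delta$ in the right order (it may depend on $\varepsilon$, $k$, $Q_Y$, and the coarse/solvent moduli of $\varphi$, but must be produced before $f$ is chosen) is the delicate point; the rest is routine once solvency has been converted, via Propositions~\ref{propsolvent} and~\ref{ltrivial}, into the statement that $\overline\rho_\varphi$ exceeds any prescribed value on arbitrarily far, arbitrarily long intervals.
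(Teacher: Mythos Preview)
Your proposal has a genuine gap in the quantifier structure, and it misses the key device of the paper's proof.

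First, the quantifiers. Property $\cQ(\varepsilon,\delta)$ demands that a \emph{single} $\delta$ work for \emph{all} $k\in\N$; you fix $\varepsilon$ and $k$ together and then look for $\delta$, and you explicitly allow $\delta$ to depend on $k$. With that dependence nothing is proved: if $\delta$ may depend on $k$, the trivial choice $\delta=\varepsilon/k$ already forces $\|f(\bar n)-f(\bar m)\|\le k\omega_f(1)\le\varepsilon$ for all $\bar n,\bar m$, and neither $\varphi$ nor Property~$\cQ$ of $Y$ is needed. The whole difficulty is producing $\delta$ uniform in $k$.

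Second, and more substantively, your scaling-\emph{up} strategy cannot close this gap. You want a solvent window $[a,a+C]$ with $\inf_{t\in[a,a+C]}\overline\rho_\varphi(t)>\varepsilon'$ and you want $\{R\|f(\bar n)-f(\bar m)\|:\bar n<\bar m,\ \|f(\bar n)-f(\bar m)\|>\varepsilon\}\subset[a,a+C]$. But this set of distances sits inside $(R\varepsilon,Rk\delta]$, an interval whose length $R(k\delta-\varepsilon)$ grows with $k$ (for fixed $\delta$). To accommodate it you must let $C$, and hence the location $a$ of the window and the dilation $R$, depend on $k$; then $\omega_\varphi(R\delta)$ blows up and you lose control of $\omega_{\varphi\circ(Rf)}(1)$ against the fixed threshold $Q_Y\varepsilon'$. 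The argument does not close.

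The paper avoids this by arguing by contradiction and inserting two Ramsey passes. Assuming $Q_X=0$, for each $j\in\N$ one takes the solvency interval $[R,R+j]$, obtains (from $\Delta_X(R)=0$) some $k$ and a bad $f$ with $\omega_f(1)\le 1$, applies Ramsey once to get all pairwise distances $>R$ (hence in $[R,k]$), and then Ramsey again to stabilize them into a tiny window $[a,a+\theta]$ with $\theta<j$. Now one scales \emph{down} by $R/a\le 1$ so that all distances land exactly in $[R,R+j]$; because the factor is $\le 1$, $\omega_{\varphi\circ g}(1)\le\omega_\varphi(1)$ stays bounded independently of $k$. Property $\cQ$ of $Y$ then forces $j<2\omega_\varphi(1)Q_Y^{-1}$ for every $j$, the desired contradiction. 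The Ramsey stabilization is precisely what lets one replace your ``whole range of possible distances'' by a narrow window, decoupling $k$ from the solvent scale.

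Finally, your reduction of (ii) and of the ``in particular'' clause is not clean. Case (ii) concerns $\varphi$ defined only on $B_X$, so it does not reduce to case (i) by any global construction; the paper proves it directly by a parallel contradiction/Ramsey argument, now using uniform continuity to thicken $\{t\}$ to an interval $[s,r]\subset(0,1)$ on which $\|\varphi(x)-\varphi(y)\|>\gamma$, and scaling and translating $f$ into $\rho B_X$. Your route for the ``in particular'' via Proposition~\ref{Rosendal} also requires knowing that $(\oplus Y)_{\mathcal E}$ has Property~$\cQ$ whenever $Y$ does, which you assert but do not justify; the paper bypasses this entirely by proving (ii) on its own.
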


\begin{proof}
(i) Assume $\varphi: X\to Y$ is a coarse solvent map. In particular, $\omega_\varphi(1)>0$. Fix $j\in\N$, and pick $R>0$  such that

\begin{align}\label{EqPropQ1}\|x-y\|\in [R,R+j]\ \ \text{ implies }\ \ \|\varphi(x)-\varphi(y)\|>j,
\end{align}\hfill

\noindent for all $x,y\in X$. Assume that $X$ does not have Property $\cQ$. So, $\Delta_X(R)=0$, and  there exists $k\in\N$, and  $f:\mathcal{P}_k(\N)\to X$ with $\omega_{f}(1)\leq 1$, such that, for all infinite $\M\subset \N$, there exists $\bar{n}<\bar{m}\in \mathcal{P}_k(\M)$ such that $ \|f(\bar{n})-f(\bar{m})\|> R$. By standard Ramsey theory (see \cite{T}, Theorem 1.3), we can assume that $\|f(\bar{n})-f(\bar{m})\|>R$, for all $\bar{n}<\bar{m}\in \mathcal{P}_k(\M)$.

Pick a positive $\theta <j$.  As $\omega_{f}(1)\leq 1$, we have that $\|f(\bar{n})-f(\bar{m})\|\in [R,k]$, for all $\bar{n}<\bar{m}\in \mathcal{P}_k(\M)$. Therefore, applying Ramsey theory again, we can get an infinite subset $\M'\subset \M$, and $a\in [R,k]$ such that $\|f(\bar{n})-f(\bar{m})\|\in [a,a+\theta]$, for all $\bar{n}<\bar{m}\in\mathcal{P}_k(\M')$.  By our choice of $\theta$, it follows that 

\begin{align}\label{EqProp2} \Big\|\frac{R}{a}f(\bar{n})-\frac{R}{a}f(\bar{m})\Big\|\in [R,R+j], \ \ \text{ for all }\ \ \bar{n}<\bar{m}\in\mathcal{P}_k(\M').
\end{align}\hfill

Let $Q_Y>0$ be the constant given by the fact that $Y$ has  Property $\cQ$. Let $g=(R/a)f$. As $R/a\leq 1$, we have that $\omega_{\varphi\circ {g}}(1)\leq  \omega_\varphi(1)$. As $\Delta_Y(2\omega_\varphi(1)Q_Y^{-1})= 2\omega_\varphi(1) $, we get that there exists $\M''\subset \M'$ such that 

\begin{align}\label{EqPropQ3}
\|\varphi(g(\bar{n}))-\varphi(g(\bar{m}))\|\leq 2 \omega_\varphi(1) Q_Y^{-1},
\end{align}\hfill

\noindent  for all $\bar{n}<\bar{m}\in \mathcal{P}_k(\M'')$. As $j$ was chosen arbitrarily, (\ref{EqPropQ1}), (\ref{EqProp2}) and (\ref{EqPropQ3}) above gives us that $j<2 \omega_\varphi(1) Q_Y^{-1}$, for all $j\in\N$. As $\varphi$ is coarse, this gives us a contradiction.

(ii)  Assume $\varphi: B_X\to Y$ is a uniformly continuous map, and let $t\in (0,1)$ be such that $\overline{\rho}_\varphi(t)>0$. As $\varphi$ is uniformly continuous, we can pick $\rho\in (t,1)$, $s,r\in (0,\rho)$ with $s<t<r$, and  $\gamma>0$, such that

\begin{align}\label{EqPropQ4}
\|x-y\|\in [s,r]\ \ \text{ implies }\ \ \|\varphi(x)-\varphi(y)\|>\gamma,
\end{align}
\hfill

\noindent for all $x,y\in \rho B_X$. Assume that $X$ does not have Property $\cQ$. So, $\Delta_{X}(s)=0$. Fix $j\in\N$.  Then, there exists $k\in\N$, and $f:\mathcal{P}_k(\N)\to X$ with $\omega_{f}(1)\leq j^{-1}$, such that, for all infinite $\M\subset \N$, there exists $\bar{n}<\bar{m}\in \mathcal{P}_k(\M)$ such that $ \|f(\bar{n})-f(\bar{m})\|> s$. Without loss of generality, we can assume that  $ \|f(\bar{n})-f(\bar{m})\|> s$, for all $\bar{n}<\bar{m}\in\mathcal{P}_k(\M)$.  

Pick a positive $\theta<(r-s)$. As $ \|f(\bar{n})-f(\bar{m})\|\in [s,k]$, we can use  Ramsey theory once again to pick an infinite $\M'\subset\M$, and $a\in [s,k]$ such that $ \|f(\bar{n})-f(\bar{m})\|\in [a,a+\theta]$, for all $\bar{n}<\bar{m}\in \mathcal{P}_k(\M')$. By our choice of $\theta$, it follows that 

\begin{align}\label{EqPropQ5} \Big\|\frac{s}{a}f(\bar{n})-\frac{s}{a}f(\bar{m})\Big\|\in [s,r], \ \ \text{ for all }\ \ \bar{n}<\bar{m}\in\mathcal{P}_k(\M').
\end{align}\hfill

Let $\bar{m}_0$ be the first $k$ elements of $\M'$, and $\M''=\M'\setminus \bar{m}_0$. For each $\bar{n}\in\mathcal{P}_k(\M'')$, let  $h(\bar{n})=(s/a)(f(\bar{n})-f(\bar{m}_0))$. Then,  $h(\bar{n})\in \rho B_X$, and $\|h(\bar{n})-h(\bar{m})\|\in [s,r]$, for all $\bar{n}<\bar{m}\in\mathcal{P}_k(\M'')$. As $s/a\leq 1$, we have $\omega_h(1)\leq \omega_f(1)$. Hence, $\omega_{\varphi\circ {h}}(1)\leq \omega_\varphi(j^{-1})$.

Let $Q_Y>0$ be the constant given by the fact that $Y$ has  Property $\cQ$. As $\Delta_Y(2 \omega_\varphi(j^{-1}) Q_Y^{-1})= 2 \omega_\varphi(j^{-1}) $,  there exists $\M'''\subset \M''$ such that 

\begin{align}\label{EqPropQ6}
\|\varphi(h(\bar{n}))-\varphi(h(\bar{m}))\|\leq 2 \omega_\varphi(j^{-1}) Q_Y^{-1},
\end{align}\hfill

\noindent  for all $\bar{n}<\bar{m}\in \mathcal{P}_k(\M''')$. As $j$ was chosen arbitrarily, (\ref{EqPropQ4}), (\ref{EqPropQ5}) and (\ref{EqPropQ6}) gives us that $\gamma<2 \omega_\varphi(j^{-1}) Q_Y^{-1}$, for all $j\in\N$. As $\varphi$ is uniformly continuous, this gives us a contradiction.
\end{proof}

We can now prove the following generalization of Theorem 5.1 of \cite{K}.

\begin{theorem}\label{IntoSuperreflexive2}
Let $X$ and $Y$ be Banach spaces, and assume that $Y$ is reflexive (resp. super-reflexive). If either

\begin{enumerate}[(i)]
\item there exists a coarse solvent map $X\to Y$, or
\item there exists a uniformly continuous map $\varphi:B_X\to Y$ such that $\overline{\rho}_\varphi(t)>0$, for some $t\in (0,1)$,
\end{enumerate}

\noindent then, $X$ is either reflexive (resp. super-reflexive) or $X$ has a spreading model equivalent to the $\ell_1$-basis (resp. trivial type). 
\end{theorem}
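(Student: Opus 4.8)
The plan is to combine Theorem \ref{PropertyQ} with Kalton's structural results on spaces with Property $\mathcal{Q}$, treating the reflexive and super-reflexive cases in parallel. First I would invoke Theorem \ref{PropertyQ}: under either hypothesis (i) or (ii), since every reflexive (in particular every super-reflexive) Banach space has Property $\mathcal{Q}$ — this is due to Kalton, \cite{K}, and is the content of \cite{K}, Theorem 4.2 — we conclude that $X$ itself has Property $\mathcal{Q}$. This reduces the theorem to a purely linear statement about Banach spaces with Property $\mathcal{Q}$, removing the nonlinear map entirely.

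Next, I would recall the dichotomy that Kalton established for spaces with Property $\mathcal{Q}$: if $X$ has Property $\mathcal{Q}$ and $X$ is \emph{not} reflexive, then $X$ contains (by James' characterization of reflexivity, applied together with Property $\mathcal{Q}$, as in the proof of \cite{K}, Theorem 5.1) a spreading model equivalent to the $\ell_1$-basis. Concretely, non-reflexivity yields, via James' theorem, a normalized basic sequence $(x_n)$ and $\theta>0$ with a lower $\ell_1$-type estimate on "separated" convex combinations; feeding an appropriate map $\mathcal{P}_k(\N)\to X$ built from these vectors into the Property $\mathcal{Q}$ inequality and passing to a subsequence (Ramsey) produces a subsequence generating an $\ell_1$ spreading model. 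This handles the reflexive case.

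For the super-reflexive case one argues by contraposition on \emph{super-reflexivity} rather than reflexivity: suppose $X$ is not super-reflexive. If $X$ is moreover not reflexive, the previous paragraph already gives an $\ell_1$ spreading model (which has trivial type), so we are done; if $X$ is reflexive but not super-reflexive, then by the Enflo–James renorming theory $\ell_1^n$'s are uniformly finitely representable in $X$ is \emph{not} quite what fails — rather, one uses that failure of super-reflexivity means some non-reflexive space is finitely representable, or more directly that $X$ fails to have an equivalent uniformly convex norm, hence (by Kalton's analysis of Property $\mathcal{Q}$ for super-reflexive targets, \cite{K}, Theorem 5.1) the Property $\mathcal{Q}$ constant forces $\ell_1^n$'s to be uniformly present in spreading models, giving a spreading model with trivial type. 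I would lift this last implication essentially verbatim from Kalton's proof of \cite{K}, Theorem 5.1, since Property $\mathcal{Q}$ of $X$ is now in hand and that is the only input his argument uses.

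The main obstacle is the super-reflexive case: unlike reflexivity, super-reflexivity is not characterized by a single James-type sequence inside $X$, so one cannot directly extract a spreading model from $X$ alone. The correct route is Kalton's: use that Property $\mathcal{Q}$ passes to ultrapowers (or, equivalently, track the quantitative constant $Q_X$ through the finite representability), so that failure of super-reflexivity produces a non-reflexive space with Property $\mathcal{Q}$ in which the reflexive-case argument runs, and then transfer the resulting $\ell_1$-spreading-model / trivial-type conclusion back to $X$. Verifying that Property $\mathcal{Q}$ is preserved under the relevant finite-representability / ultrapower construction — with control on $Q_X$ — is the delicate point, and is exactly where I would cite \cite{K} rather than reprove it.
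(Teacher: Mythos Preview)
Your treatment of the reflexive case is essentially the paper's: invoke Theorem \ref{PropertyQ} (using that reflexive spaces have Property $\cQ$, \cite{K}, Corollary 4.3) to get that $X$ has Property $\cQ$, then cite Kalton's dichotomy (\cite{K}, Theorem 4.5) to conclude $X$ is reflexive or has an $\ell_1$-spreading model.

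The super-reflexive case, however, has a genuine gap. Your plan is to first establish Property $\cQ$ for $X$ and then argue that Property $\cQ$ passes to ultrapowers of $X$, citing \cite{K} for this step. But \cite{K} does not prove that Property $\cQ$ is stable under ultrapowers, and this is not obvious: the definition of Property $\cQ$ involves an infinite Ramsey-type conclusion (the existence of an infinite $\M\subset\N$), and such statements do not routinely transfer to ultrapowers---lifting a map $f:\cP_k(\N)\to X^I/\cU$ coordinatewise yields, for each $i$, a possibly different infinite set $\M_i$, with no mechanism to combine them. Your assertion that ``Property $\cQ$ of $X$ is now in hand and that is the only input [Kalton's] argument uses'' is therefore the point where the proposal breaks.

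The paper avoids this entirely by lifting the \emph{map} rather than the property: if $\varphi:X\to Y$ is coarse and solvent, then the induced map $X^I/\cU\to Y^I/\cU$ is again coarse and solvent (the moduli $\omega_\varphi$ and $\overline{\rho}_\varphi$ transfer directly), and similarly for hypothesis (ii). Since $Y$ super-reflexive means every $Y^I/\cU$ is reflexive and hence has Property $\cQ$, one re-applies Theorem \ref{PropertyQ} at the ultrapower level to obtain Property $\cQ$ for every $X^I/\cU$. Then, assuming $X$ has nontrivial type (so every $X^I/\cU$ does too, ruling out $\ell_1$-spreading models), Theorem 4.5 of \cite{K} forces each $X^I/\cU$ to be reflexive, i.e., $X$ is super-reflexive. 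The key maneuver you are missing is precisely this: pass the nonlinear hypothesis through the ultrapower, not the conclusion.
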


\begin{proof}
By Corollary 4.3 of \cite{K}, any reflexive Banach space has Property $\cQ$. By Theorem 4.5 of \cite{K}, a Banach space with Property $\cQ$ must be either reflexive or have a spreading model equivalent to the $\ell_1$-basis (in particular, have nontrivial type). Therefore, if $Y$ is reflexive, the result now follows from Theorem \ref{PropertyQ}.  

 For an index set $I$ and an ultrafilter $\mathcal{U}$ on $I$, denote by $X^I/\mathcal{U}$ the ultrapower of $X$ with respect to  $\mathcal{U}$. Say $Y$ is super-reflexive. In particular, by Corollary 4.3 of \cite{K}, every ultrapower of $Y$ has Property $\mathcal{Q}$. If $X$ maps into $Y$ by a  coarse and solvent map, then $X^I/\mathcal{U}$ maps into $Y^I/\mathcal{U}$ by a coarse and solvent map.  Therefore, it follows from Theorem \ref{PropertyQ} that every ultrapower of $X$ has Property $Q$. Suppose $X$ has nontrivial type. Then, all ultrapowers of $X$ have nontrivial type. Therefore, by Theorem 4.5 of \cite{K}, we conclude that all ultrapowers of $X$ are reflexive. Hence, item (i)  follows. 

Similarly, if there exists $\varphi:B_X\to Y$ as in item (ii), then the unit balls of ultrapowers of $X$ are mapped into ultrapowers of $Y$ by maps with the same properties as $\varphi$, and item (ii) follows. 
\end{proof}

\begin{proof}[Proof of Theorem \ref{IntoSuperreflexive}]
Item (ii) of Theorem \ref{IntoSuperreflexive} follows directly from item (ii) of Theorem \ref{IntoSuperreflexive2}.
\end{proof}

\begin{remark}\label{Gideon}
The statement in Theorem \ref{IntoSuperreflexive} cannot be improved so that if $X$ embeds into a super-reflexive space, then $X$ is either super-reflexive or it has an $\ell_1$-spreading model. Indeed, it was proven in Proposition 3.1 of \cite{NaorSchechtman} that $\ell_2(\ell_1)$ strongly embeds into $L_p$, for all $p\geq 4$.  As $(\oplus_n\ell_1^n)_{\ell_2}\subset \ell_2(\ell_1)$, it follows that $(\oplus_n\ell_1^n)_{\ell_2}$ strongly embeds into $L_4$. However  $(\oplus_n\ell_1^n)_{\ell_2}$ is neither super-reflexive nor contains an $\ell_1$-spreading model.
\end{remark}

\section{Embeddings into Hilbert spaces.}\label{SectionHilbert}

In \cite{Ra}, Randrianarivony showed that a Banach space $X$ coarsely embeds into a Hilbert space if and only if it uniformly embeds into a Hilbert space. This result together with Theorem 5 of \cite{No}, gives a positive answer to Problem \ref{mainproblem} for $Y=\ell_p$, for $p\in[1,2]$. In this section, we show that Problem \ref{mainproblemPartII} also has a positive answer if $Y$ is $\ell_p$, for any $p\in [1,2]$. 

First, let us prove a simple lemma. For $\delta>0$, a subset $S$ of a metric space $(M,d)$ is called \emph{$\delta$-dense} if $d(x,S)< \delta$, for all $x\in M$.

\begin{lemma}\label{SolventMapNet}
Let $(M,d)$ and $(N,\partial)$ be Banach spaces and $S\subset M$ be a $\delta$-dense set, for some $\delta>0$. Let $f:M\to N$ be a coarse map such that $f_{|S}$ is solvent. Then $f$ is solvent.
\end{lemma}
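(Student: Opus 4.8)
The plan is to transfer solvency from the dense subset $S$ to all of $M$ by approximating arbitrary points of $M$ by points of $S$ and controlling the error using the coarseness of $f$. Recall that since $f$ is coarse and $M$ is a Banach space, after the normalization discussed in Subsection \ref{SubsectionEmb} we may assume $\omega_f(t)\le Lt+L$ for some $L>0$; in particular $f$ does not expand distances too wildly, and small perturbations of the input produce bounded perturbations of the output.

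First I would fix $n\in\N$ and apply the solvency of $f_{|S}$ to the integer $n':=n+\lceil 2\delta\rceil+2$ (a slightly inflated version of $n$) to obtain $R>0$ such that for all $x,y\in S$, $d(x,y)\in[R,R+n']$ implies $\partial(f(x),f(y))>n'$. Wait — actually the cleaner route is: choose $R$ from the solvency of $f_{|S}$ for a parameter large enough to absorb both the additive shift in distances caused by moving to nearby net points \emph{and} the additive error $2\omega_f(2\delta+1)=:C$ in the images. So pick $n':=n+2\lceil\delta\rceil+2$ and get the corresponding $R'$ for $f_{|S}$; then set $R:=R'+\delta+1$. Now take arbitrary $x,y\in M$ with $d(x,y)\in[R,R+n]$. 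Choose $x',y'\in S$ with $d(x,x')<\delta$ and $d(y,y')<\delta$. Then $d(x',y')\in[R-2\delta,R+n+2\delta]\subseteq[R',R'+n']$, so by the choice of $R'$ we get $\partial(f(x'),f(y'))>n'$. Finally, by the triangle inequality and coarseness,
\[
\partial(f(x),f(y))\ \ge\ \partial(f(x'),f(y'))-\partial(f(x),f(x'))-\partial(f(y),f(y'))\ >\ n'-2\omega_f(\delta)\ \ge\ n,
\]
provided $n'$ was chosen to exceed $n+2\omega_f(\delta)$; since $\omega_f(\delta)\le L\delta+L$ is a fixed finite constant, I simply enlarge the bracket parameter at the start to $n':=n+2\lceil L\delta+L\rceil+1$. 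This establishes the solvency condition for $f$ at the integer $n$, and since $n$ was arbitrary, $f$ is solvent.

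The main thing to be careful about — the only real obstacle, and it is a bookkeeping one rather than a conceptual one — is getting the two additive corrections to line up: moving from $x,y$ to net points $x',y'$ shifts \emph{distances in the domain} by at most $2\delta$, while it shifts \emph{distances in the range} by at most $2\omega_f(\delta)$, and these two corrections enter at different stages (the first when invoking $f_{|S}$'s solvency, the second when passing back to $f$). One must pick the inflated parameter $n'$ and the radius $R$ up front so that both slacks are covered simultaneously; coarseness of $f$ is exactly what guarantees $\omega_f(\delta)<\infty$ so that this is possible. No appeal to Proposition \ref{propsolvent} is needed, though one could alternatively phrase the whole argument through $\overline{\rho}$ and $\sup_t\overline{\rho}_f(t)=\infty$.
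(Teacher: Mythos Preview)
Your approach is essentially identical to the paper's: inflate the solvency parameter, approximate $x,y$ by net points $x',y'\in S$, invoke solvency of $f_{|S}$ on the shifted interval, and recover the bound for $f(x),f(y)$ via the triangle inequality and $\omega_f(\delta)<\infty$. The only blemish is arithmetic: your choices $R:=R'+\delta+1$ and the various candidates for $n'$ do not quite give $[R-2\delta,R+n+2\delta]\subseteq[R',R'+n']$ when $\delta>1$ (e.g.\ take $R:=R'+2\delta$ and $n'\ge n+\lceil 4\delta\rceil+\lceil 2\omega_f(\delta)\rceil$ instead), but you correctly identify this as pure bookkeeping and the argument goes through once the constants are tidied.
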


\begin{proof}
Let $n\in\N$. As $f_{|S}$ is solvent and $\omega_f(\delta)<\infty$, we can  pick $R>0$ such that 

$$d(x,y)\in [R-2\delta,R+n+2\delta]\ \ \text{ implies }\ \ \partial(f(x),f(y))>n+2\omega_f(\delta),$$\hfill 

\noindent for all $x,y\in S$. Pick $x,y\in X$, with $d(x,y)\in [R,R+n]$. As $S$ is $\delta$-dense, we can pick $x',y'\in S$ such that $d(x,x')\leq \delta$ and $d(y,y')\leq\delta$.  Hence,   $d(x',y')\in [R-2\delta,R+n+2\delta]$, which gives us that $\partial(f(x'),f(y'))>n+2\omega_f(\delta)$. Therefore,  we conclude that $\partial(f(x),f(y))> n$. 
 \end{proof}

The next lemma is an adaptation of Proposition 2 of \cite{Ra}, and its proof is analogous to the proof of Theorem 1 of \cite{JohnsonRandrianarivony}. Before stating the lemma, we need the following definition: a map $K:X\times X\to \R$ is called a \emph{negative definite kernel} (resp. \emph{positive definite kernel}) if 

\begin{enumerate}[(i)]
\item  $K(x,y)=K(y,x)$, for all $x,y\in X$, and 
\item $\sum_{i,j} K(x_i,x_j)c_ic_j\leq 0$ (resp. $\sum_{i,j} K(x_i,x_j)c_ic_j\geq 0$), for  all $n\in\N$, all $x_1,\ldots,x_n\in X$, and all $c_1,\ldots, c_n\in \R$, with $\sum_i c_i=0$.
\end{enumerate}

\noindent  A function $f: X\to R$ is called \emph{negative definite} (resp. \emph{positive definite}) if $K(x,y)=f(x-y)$ is a negative definite kernel (resp. positive definite kernel). 

\begin{lemma}\label{LemmaJR}
Let $X$ be a Banach space and assume that $X$ maps into a Hilbert space by a map which is coarse and solvent. Then there exist $\alpha>0$, a map $\overline{\rho}:[0,\infty)\to [0,\infty)$, with $\limsup_{t\to\infty}\overline{\rho} (t)=\infty$, and  a continuous negative definite function $g: X\to \R$ such that 

\begin{enumerate}[(i)]
\item $g(0)=0$, and
\item $\overline{\rho}(\|x\|)\leq g(x)\leq \|x\|^{2\alpha}$, for all $x\in X$.
\end{enumerate}
\end{lemma}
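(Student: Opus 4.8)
The plan is to follow the classical Schoenberg-type route used by Randrianarivony and by Johnson--Randrianarivony, adapting it so that the lower bound on the negative definite function is controlled by the \emph{exact} compression modulus $\overline{\rho}$ rather than by $\rho$. First I would invoke the normalization discussed in Subsection \ref{SubsectionEmb}: since $X$ is a Banach space, a coarse map from $X$ is automatically Lipschitz for large distances, so after rescaling I may assume we have a solvent map $f\colon X\to H$ (with $H$ a Hilbert space) satisfying $\|f(x)-f(y)\|\le \|x-y\|$ whenever $\|x-y\|\ge 1$, together with a global bound $\omega_f(t)\le Lt+L$. By Proposition \ref{propsolvent}, solvency gives $\sup_{t>0}\overline{\rho}_f(t)=\infty$. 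Set $\overline{\rho}_0:=\overline{\rho}_f$; then $K_0(x,y):=\|f(x)-f(y)\|^2$ is a negative definite kernel on $X$ (squared distances in Hilbert space are negative definite), it vanishes on the diagonal, it is bounded above by $(L\|x-y\|+L)^2$, and it is bounded below by $\overline{\rho}_0(\|x-y\|)^2$ on the set $\|x-y\|=t$.

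The next step is to pass from a kernel $K(x,y)$ depending on the pair to a \emph{function} of the difference $x-y$, and simultaneously to lower the exponent of growth from $2$ down to some $2\alpha<1$ (this $\alpha$ is the one that appears in Lemma \ref{lemmasolvent}-type arguments elsewhere in the paper, but here its only role is to keep $g$ subadditive-ish and to match the statement). To get a function of the difference one uses the standard group-averaging / invariance trick: because $(X,+)$ is an abelian group acting on itself by translations, one can symmetrize $K_0$ over translations (or, more concretely, use that $x\mapsto \|f(x)-f(0)\|^2$ already almost works, but one wants translation invariance of the bounds). A cleaner device, exactly as in \cite{JohnsonRandrianarivony}, is: a kernel that is negative definite and whose value at $(x,y)$ lies between $\overline{\rho}_0(\|x-y\|)$ and a power of $\|x-y\|$ can be replaced, via Schoenberg's theorem, by $g(x):=\Phi(K_0'(x,0))$ for a suitable Bernstein-type function $\Phi$ (e.g. $t\mapsto t^{\alpha}$ composed appropriately, or $t\mapsto 1-e^{-t}$ iterated), which keeps negative definiteness, kills the diagonal, forces the upper bound $g(x)\le \|x\|^{2\alpha}$ for large $\|x\|$ (and, after a further harmless modification near $0$, for all $x$), and retains a lower bound of the form $g(x)\ge \overline{\rho}(\|x\|)$ for a modified modulus $\overline{\rho}$ still satisfying $\limsup_{t\to\infty}\overline{\rho}(t)=\infty$ — the $\limsup$ (rather than $\lim$) is all that survives because solvency, unlike expansion, only controls $\overline{\rho}$ on a sequence of scales going to infinity. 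Finally one checks continuity of $g$: uniform continuity of $f$ on bounded sets (it is coarse hence bounded on bounded sets, and a coarse map between Banach spaces is uniformly continuous on bounded sets after the normalization, or one simply notes $g$ is built from a Lipschitz-for-large-distances map and a continuous $\Phi$) gives continuity of $g$ on $X$.

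Concretely I would organize it as: (1) normalize $f$; (2) record that $K_0(x,y)=\|f(x)-f(y)\|^2$ is negative definite, vanishes on the diagonal, and is squeezed between $\overline{\rho}_f(\|x-y\|)^2$ and $(L\|x-y\|+L)^2$; (3) choose $\alpha\in(0,1)$ and apply Schoenberg (the function $t\mapsto t^{\alpha}$ is a Bernstein function, so $K_0^{\alpha}$ is again negative definite) to get a negative definite kernel bounded above by $(L\|x-y\|+L)^{2\alpha}$, then absorb the constant $L$ by a further rescaling of the domain and a translation to make it a function $g$ of $x-y$ with $g(x)\le\|x\|^{2\alpha}$; (4) read off the lower bound $g(x)\ge \overline{\rho}_f(\|x\|)^{2\alpha}=:\overline{\rho}(\|x\|)$ and note $\limsup_{t\to\infty}\overline{\rho}(t)=\infty$ follows from Proposition \ref{propsolvent}; (5) verify $g(0)=0$ and continuity. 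The main obstacle — and the one genuinely different point from \cite{Ra} and \cite{JohnsonRandrianarivony} — is managing the bounds only along a sequence of scales: one must be careful that every operation performed (raising to the power $\alpha$, rescaling the domain, the near-origin modification needed to upgrade $g(x)\le\|x\|^{2\alpha}$ from "large $\|x\|$" to "all $x$") preserves the property $\limsup_{t\to\infty}\overline{\rho}(t)=\infty$ rather than accidentally degrading it; all of these are monotone or homogeneous operations so this works, but it is the step that needs the solvent hypothesis and where the argument for $\rho$ does not transfer verbatim.
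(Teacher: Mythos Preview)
Your outline has the right ingredients (Schoenberg on $\|f(x)-f(y)\|^2$, the power $t\mapsto t^{\alpha}$, averaging over translations via an invariant mean, and tracking $\overline{\rho}_f$ rather than $\rho_f$), but there is a genuine missing step that the paper's proof handles and your plan does not. A coarse map $f$ is only Lipschitz for distances $\ge 1$; it need not be continuous at all, and the additive constant in $\omega_f(t)\le Lt+L$ cannot be removed by rescaling the domain. Consequently, after raising to the power $\alpha$ and averaging with an invariant mean you get at best $g(x)\le (L\|x\|+L)^{2\alpha}$, not $g(x)\le \|x\|^{2\alpha}$ for all $x$. Your proposed ``harmless modification near $0$'' is not harmless: locally altering a negative definite function generally destroys negative definiteness, so this is precisely the place where the argument breaks. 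The same gap also undermines your continuity claim --- the assertion that a coarse map between Banach spaces is uniformly continuous on bounded sets is false (any bounded discontinuous map is coarse), so continuity of $g$ does not follow from your construction.

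The paper closes this gap with an extra step you omit: it first \emph{replaces} $f$ by a globally $\alpha$-H\"older solvent map into a Hilbert space. Concretely, one passes from $f$ to $f_\alpha$ with $\|f_\alpha(x)-f_\alpha(y)\|^2=\|f(x)-f(y)\|^{2\alpha}$ (Schoenberg), restricts $f_\alpha$ to a $1$-net $N\subset X$ where it is genuinely $\alpha$-H\"older and still solvent, and then invokes the Wells--Williams extension theorem (this is why one needs $\alpha<1/2$) to extend $f_{\alpha|N}$ to an $\alpha$-H\"older map $F_\alpha:X\to H_\alpha$; Lemma \ref{SolventMapNet} ensures $F_\alpha$ remains solvent. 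Only \emph{after} this does one form $N(x,y)=\|F_\alpha(x)-F_\alpha(y)\|^2$ and average against an invariant mean to obtain $g(x)=\int_X N(y+x,y)\,d\mu(y)$; now the pointwise bound $N(y+x,y)\le\|x\|^{2\alpha}$ holds for every $x$, giving (ii) directly, and H\"older continuity of $F_\alpha$ gives continuity of $g$. Also note that in your step (3) ``a translation to make it a function of $x-y$'' is not the right mechanism: translation invariance comes from the invariant-mean averaging, not from any single translation.
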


\begin{proof}[Sketch of the Proof of Lemma \ref{LemmaJR}.]
Let $H$ be a Hilbert space and consider a coarse solvent map $f: X\to H$. Without loss of generality, we may assume that $\|f(x)-f(y)\|\leq \|x-y\|$, for all $x,y\in X$, with $\|x-y\|\geq 1$ (see Subsection \ref{SubsectionEmb}). \\

\textbf{Claim 1:} Let $\alpha\in (0,1/2)$. Then $X$  maps into a Hilbert space by a map which is $\alpha$-H\"{o}lder and solvent.\\

As $H$ is Hilbert,  the assignment $(x, y)\mapsto\|f(x) - f(y) \|^2$ is a negative definite kernel on X (this is a simple computation and it is contained in the proof of  Proposition 3.1 of \cite{Nowak2005}). Hence, for all $\alpha\in (0,1)$, the kernel  $N(x, y) = \|f(x) - f(y)\|^{2\alpha}$
 is also  negative definite (see \cite{Nowak2005}, Lemma 4.2).  So, there exists a Hilbert space $H_\alpha$ and a map $f_\alpha: X\to H_\alpha$ such that $N(x,y)=\|f_\alpha(x)-f_\alpha(y)\|^2$, for all $x,y\in X$ (see \cite{Nowak2005}, Theorem 2.3(2)). This gives us that
 
 $$\big(\overline{\rho}_f(\|x-y\|)\big)^\alpha\leq \|f_\alpha(x)-f_\alpha(y)\|\leq \|x-y\|^\alpha,$$\hfill

\noindent for all $x,y\in X$, with $\|x-y\|\geq 1$.   In particular, $f_{\alpha}$ is solvent. Hence, if $N\subset X$ is a $1$-net (i.e., a maximal $1$-separated set), the restriction $f_{\alpha|N}:N\to H_\alpha$ is $\alpha$-H\"{o}lder and solvent. Using that $\alpha\in (0,1/2)$, Theorem 19.1 of \cite{WW} gives us that there exists an $\alpha$-H\"{o}lder map $F_\alpha:X\to H_\alpha$ extending $f_{\alpha|N}$. By Lemma \ref{SolventMapNet}, $F_\alpha$ is also solvent. This finishes the proof of Claim 1. 

By Claim 1 above, we can assume that $f:X\to H$ is an $\alpha$-H\"{o}lder solvent map, with $\alpha\in (0,1/2)$. Set $N(x,y)=\|f(x)-f(y)\|^2$, for all $x,y\in X$. So, $N$ satisfies 

\begin{align}\label{eqN}
\big(\overline{\rho}_f(\|x-y\|)\big)^2\leq N(x,y)\leq \|x-y\|^{2\alpha},
\end{align}\hfill

\noindent for all $x,y\in X$. Let $\mu$ be an invariant mean on the bounded functions $X\to \R$ (see \cite{BL}, Appendix C, for the definition of an invariant mean, and \cite{BL}, Theorem C.1, for the existence of such invariant mean), and define 

$$g(x)=\int_XN(y+x,y)d\mu(y),\ \ \text{ for all }\ \ x\in X.$$\hfill

\noindent Let $\overline{\rho}(t)=(\overline{\rho}_f(t))^2$, for all $t\geq 0$. As $\int_X 1d\mu=1$, Inequality \ref{eqN} gives us that items (i) and (ii) are satisfied. As $f$ is solvent, we also have that $
\limsup_{t\to \infty}\overline{\rho}(t)=\infty$. The proof that $g$  is a negative definite kernel is contained in Step 2 of \cite{JohnsonRandrianarivony} and the proof that $g$ is continuous is contained in Step 3 of \cite{JohnsonRandrianarivony}. As both proofs are simple computations, we omit them here. 
\end{proof}

We can now prove the main theorem of this section. For that, given a probability space $(\Omega,\mathcal{A},\mu)$, we denote  by $L_0(\mu)$ the space of all measurable functions $\Omega\to \C$ with metric determined by convergence in probability. 

\begin{thm}\label{ThmHilbertCru}
Let $X$ be a Banach space. Then the following are equivalent.

\begin{enumerate}[(i)]
\item $X$ coarsely embeds into a Hilbert space.
\item $X$ uniformly embeds into a Hilbert space.
\item $X$ strongly embeds into a Hilbert space.
\item $X$ maps into a Hilbert space by a map which is coarse and solvent.
\item $X$ maps into a Hilbert space by a map which is uniformly continuous and almost uncollapsed.
\item There is a probability space $(\Omega, \mathcal{A}, \mu)$ such that $X$ is linearly isomorphic to a
subspace of $L_0(\mu)$. 
\end{enumerate}
\end{thm}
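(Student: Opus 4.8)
The plan is to prove the cycle of implications $(iii)\Rightarrow(ii)\Rightarrow(i)\Rightarrow(iv)$ is trivial except for $(i)\Rightarrow(iv)$... wait, let me think about the logical structure more carefully. Actually $(iii)\Rightarrow(ii)$, $(iii)\Rightarrow(i)$, $(iii)\Rightarrow(iv)$, $(iii)\Rightarrow(v)$, $(i)\Rightarrow(iv)$, and $(ii)\Rightarrow(v)$ are all immediate from the definitions (a strong embedding is by definition both a coarse and a uniform embedding; a coarse embedding is coarse and solvent since expanding maps are solvent; a uniform embedding is uniformly continuous and uncollapsed, hence almost uncollapsed). So the real content is to close the loop by proving $(iv)\Rightarrow(iii)$ and $(v)\Rightarrow(iv)$ (or $(v)\Rightarrow(iii)$ directly), together with the equivalence of $(vi)$ with the rest. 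I would organize it as: $(v)\Rightarrow(iv)$, then $(iv)\Rightarrow(vi)$, then $(vi)\Rightarrow(iii)$, with $(iii)\Rightarrow(i),(ii),(iv),(v)$ and $(i)\Rightarrow(iv)$, $(ii)\Rightarrow(v)$ being the easy arrows.

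First, for $(v)\Rightarrow(iv)$: given a uniformly continuous almost uncollapsed map $X\to H$ into a Hilbert space, Proposition \ref{Rosendal} produces a uniformly continuous \emph{solvent} map $\Phi:X\to(\oplus H)_{\ell_2}$, and $(\oplus H)_{\ell_2}$ is again a Hilbert space; a uniformly continuous map on a Banach space is automatically coarse (see Subsection \ref{SubsectionEmb}), so $\Phi$ is coarse and solvent, giving $(iv)$. Next, $(iv)\Rightarrow(vi)$: this is the heart of the argument and is handled by Lemma \ref{LemmaJR}. That lemma gives $\alpha>0$, a modulus $\overline{\rho}$ with $\limsup_{t\to\infty}\overline{\rho}(t)=\infty$, and a continuous negative definite function $g:X\to\R$ with $g(0)=0$ and $\overline{\rho}(\|x\|)\le g(x)\le\|x\|^{2\alpha}$. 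By the classical Schoenberg-type correspondence (the same machinery cited in the proof of Lemma \ref{LemmaJR}: negative definite kernels vanishing on the diagonal correspond to Gaussian-type embeddings, and here one invokes the representation of continuous negative definite functions on a Banach space via a probability space — essentially the construction of a Gaussian random variable whose $L_0$-distance recovers a monotone function of $g$), one obtains a linear map $X\to L_0(\mu)$ for a suitable probability space $(\Omega,\mathcal A,\mu)$, and the bound $\overline{\rho}(\|x\|)\le g(x)$ with $\limsup\overline{\rho}=\infty$ forces this linear map to be an isomorphism onto its image (the $L_0$-metric does not collapse on any ball, and the growth of $\overline{\rho}$ prevents degeneration at infinity, which — combined with linearity and a standard homogeneity/closed-graph argument — upgrades injectivity to an isomorphic embedding). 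This is exactly the argument of Randrianarivony (Proposition 2 of \cite{Ra}) adapted to the exact compression modulus, which is why the lemma was phrased the way it is.

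Then $(vi)\Rightarrow(iii)$ is a known result: if $X$ embeds linearly isomorphically into $L_0(\mu)$, then $X$ strongly embeds into a Hilbert space — this follows from Nikishin-type factorization / the fact that $L_0(\mu)$ itself strongly embeds into $L_2$-type spaces, or directly from Aharoni–Maurey–Mityagin style results; I would just cite \cite{Ra} (page 1315) and \cite{No} for this step, since Randrianarivony's characterization already packages it. Finally I assemble the cycle: $(iii)\Rightarrow(ii)\Rightarrow(v)\Rightarrow(iv)\Rightarrow(vi)\Rightarrow(iii)$, and note separately that $(iii)\Rightarrow(i)\Rightarrow(iv)$ closes $(i)$ into the loop. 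The main obstacle is the $(iv)\Rightarrow(vi)$ step — specifically, verifying that the negative definite function $g$ from Lemma \ref{LemmaJR}, which only controls the \emph{exact} compression modulus rather than the honest compression modulus $\rho_f$, still yields an honest linear isomorphic embedding into $L_0(\mu)$; the key observation making this work is that linearity of the resulting map converts control along single spheres $\{\|x\|=t\}$ into control everywhere by scaling, so the weaker "solvent/almost uncollapsed" hypothesis is in fact enough, which is the whole point of the theorem.
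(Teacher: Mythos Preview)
Your proposal is correct and follows essentially the same route as the paper: reduce everything to $(v)\Rightarrow(iv)$ via Proposition~\ref{Rosendal} with the $\ell_2$-basis, and then $(iv)\Rightarrow(vi)$ via Lemma~\ref{LemmaJR} plus the Randrianarivony/Aharoni--Maurey--Mityagin machinery, with the remaining equivalences cited from \cite{Ra}, \cite{No}, and \cite{Ro}. The one place you are hand-wavy is the passage from the negative definite $g$ to the linear isomorphic embedding into $L_0(\mu)$: the paper does this concretely by setting $F=e^{-g}$ (positive definite by Schoenberg), invoking Lemma~4.2 of \cite{AMM} to write $F(tx)=\int_\Omega e^{itU(x)(\omega)}\,d\mu(\omega)$ for a continuous linear $U:X\to L_0(\mu)$, and then arguing directly that if $U$ failed to be bounded below one would have $\|x_n\|=1$ with $U(x_n)\to 0$, hence $F(t_0x_n)\to 1$ for every $t_0$, contradicting $F(t_0x_n)\le e^{-\overline\rho(t_0)}<1/2$ at a well-chosen $t_0$ --- so no ``closed-graph argument'' is needed, and your intuition that linearity turns control on a single sphere into control everywhere is exactly right.
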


\begin{proof}
We only need to show that (iv) implies (vi). Indeed, the equivalence between (i), (ii), and (vi) were established in \cite{Ra}, Theorem 1 (see the paragraph preceeding Theorem 1 of \cite{Ra} as well). By \cite{Ro}, Theorem 2, if $X$ uniformly embeds into a Hilbert space $H$ then $X$ strongly embeds into $\ell_2(H)$. Hence, (ii) and (iii) are also equivalent. Using  Proposition \ref{Rosendal}  with $\mathcal{E}$ being the standard basis of $\ell_2$, we get that  (v) implies (iv). Hence, once we show that (iv) implies (vi), all the equivalences will be established. 

Let $H$ be a Hilbert space and $f:X\to H$ be a coarse solvent map. Let $\alpha>0$, $\overline{\rho}$ and $g:X\to \R$ be given by  Lemma \ref{LemmaJR}. Define $F(x)=e^{-g(x)}$, for all $x\in X$. So, $F$ is a positive definite function (see \cite{Nowak2005}, Theorem 2.2). As $F$ is also continuous, by  Lemma 4.2 of \cite{AMM} applied to $F$,  there exist a probability space $(\Omega,\mathcal{A},\mu) $ and a continuous linear operator $U:X\to L_0(\mu)$ such that 

$$F(tx)=\int_\Omega e^{itU(x)(w)}d\mu(w), \ \ \text{for all} \ \ t\in \R, \ \ \text{and all}\ \ x\in X.$$\hfill

As $U$ is continuous, we only need to show that $U$ is injective and its inverse is continuous. Suppose false. Then there exists a sequence  $(x_n)_n$ in the unit sphere of $X$ such that $\lim_nU(x_n)=0$. By the definition of convergence in $L_0(\mu)$, this gives us that $\lim_nF(tx_n)=1$, for all $t\in \R$. As $\limsup_{t\to \infty}\overline{\rho}(t)=\infty$, we can pick $t_0>0$ such that $e^{-\overline{\rho}(t_0)}<1/2$. Hence, we have that

$$
F(t_0x_n)=e^{-g(t_0x_n)}\leq e^{ -\overline{\rho}(\|t_0x_n\|)}= e^{ -\overline{\rho}(t_0)}<\frac{1}{2}, \ \ \text{ for all } \ \  n\in\N.$$\hfill

\noindent As $\lim_nF(t_0x_n)=1$, this gives us a contradiction.
\end{proof}

\begin{proof}[Proof of Theorem \ref{ThmHilbert}.]
This is a trivial consequence of Theorem \ref{ThmHilbertCru} and the equivalence between coarse and uniform embeddability into $\ell_p$, for $p\in[1,2]$ (see \cite{No}, Theorem 5).
\end{proof}

\section{Embeddings into $\ell_\infty$.}\label{Sectionlinfty}

Kalton proved in \cite{Ka4}, Theorem 5.3, that uniform embeddability into $\ell_\infty$, coarse embeddability into $\ell_\infty$ and Lipschitz embeddability into $\ell_\infty$ are all equivalent. In this section, we show that Problem \ref{mainproblemPartII} also has a positive answer if $Y=\ell_\infty$. 

The following lemma is Lemma 5.2 of \cite{Ka4}. Although in \cite{Ka4} the hypothesis on the map are stronger, this is not used in their proof.

\begin{lemma}\label{SonventLipschitz}
Let $X$ be a Banach space and assume that there exists a Lipschitz  map $X\to \ell_\infty$ that is also almost uncollapsed. Then $X$ Lipschitz embeds into $\ell_\infty$.  
\end{lemma}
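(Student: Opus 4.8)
The plan is to mimic Kalton's classical argument showing that coarse/uniform embeddability into $\ell_\infty$ upgrades to Lipschitz embeddability, while checking that only the \emph{almost uncollapsed} hypothesis (not full uncollapsedness or solvency) is needed. So suppose $f:X\to\ell_\infty$ is Lipschitz, say $\|f(x)-f(y)\|_\infty\leq L\|x-y\|$ for all $x,y\in X$, and pick $t_0>0$ with $\overline{\rho}_f(t_0)=c>0$, i.e. $\|f(x)-f(y)\|_\infty\geq c$ whenever $\|x-y\|=t_0$. Writing $f=(f_k)_k$ with each $f_k:X\to\R$ Lipschitz with constant $\leq L$, the first step is the standard observation that for every pair $x,y$ with $\|x-y\|=t_0$ there is a coordinate $k$ with $|f_k(x)-f_k(y)|\geq c$, so the scalar Lipschitz functions $\{f_k\}$ collectively separate points at scale $t_0$.

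Next I would build the Lipschitz embedding by a gluing/rescaling construction over all scales. For each $n\in\Z$, apply the scale-$t_0$ separation to the rescaled map $x\mapsto 2^{-n}f(2^n t_0\,\cdot\,/\,t_0)$ — more precisely, consider $f$ restricted to balls of radius comparable to $2^n$ and use the Lipschitz functions $f_k$ evaluated at dilated arguments to separate points at distance $\sim 2^n$. The point is that a single scale of separation, combined with the Lipschitz bound and the homogeneity of the norm on $X$ (one can dilate: $g(x)=\lambda^{-1}f(\lambda x)$ is again Lipschitz with the same constant and has $\overline{\rho}_g(t_0/\lambda)\geq c/\lambda$), yields separation at \emph{every} scale of the form $t_0/\lambda$. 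Then assemble a countable family of $\ell_\infty$-valued Lipschitz maps $F_n$, one handling distances in a dyadic annulus $[2^n,2^{n+1}]$, each with uniformly bounded Lipschitz constant and with a uniform lower compression bound $\gtrsim$ (distance) on that annulus after rescaling; take $F=\bigoplus_n F_n$ into $\ell_\infty(\bigoplus \ell_\infty)\cong\ell_\infty$ (a countable $\ell_\infty$-sum of $\ell_\infty$ embeds isometrically into $\ell_\infty$), so that $F$ is Lipschitz and $\|F(x)-F(y)\|_\infty\gtrsim\|x-y\|$ for all $x,y$, i.e. a Lipschitz embedding.

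Concretely, the cleanest route is: for each $n\in\N$ choose a maximal $2^n$-separated subset $S_n\subset X$; on $S_n$ the rescaled map separates points at scale $\sim 2^n$, and by McShane–Whitney one extends each scalar component to a Lipschitz function on all of $X$; collecting these across a bounded window of scales $n,n+1,\dots,n+C$ gives bi-Lipschitz control on distances in $[2^n,2^{n+1}]$; finally index over all $n$ and sum. The bookkeeping is to verify that the Lipschitz constants of the assembled map stay uniformly bounded (this uses the $1$-homogeneity of the dilation and the fixed constant $L$) and that the lower bound is genuinely linear in $\|x-y\|$ and not merely nonzero (this is where having separation at \emph{every} scale $t_0/\lambda$, for all $\lambda>0$, rather than at one scale, is essential).

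The main obstacle I expect is exactly this passage from a single scale of separation to a linear lower bound across all scales while keeping the upper Lipschitz constant uniformly bounded: naively summing over infinitely many dyadic scales can blow up the Lipschitz norm, so one must arrange the $F_n$ to be "localized" — each $F_n$ should essentially only see distances in its own annulus and contribute negligibly elsewhere. This is handled by composing with a Lipschitz cutoff of the norm (e.g. multiply the scale-$n$ piece by a $1$-Lipschitz bump supported where $\|x\|\sim 2^n$, or use the telescoping-difference trick $F_n(x)-F_n(y)$ vanishing outside the relevant range), so that at any given pair $x,y$ only boundedly many $F_n$ are active. Everything else — the existence of a separating coordinate at scale $t_0$, the Lipschitz extension, the isometric embedding of the $\ell_\infty$-sum into $\ell_\infty$ — is routine and follows Kalton's Lemma 5.2 of \cite{Ka4} verbatim, the only modification being that one never uses $\rho_f$ or solvency, merely the existence of one $t_0$ with $\overline{\rho}_f(t_0)>0$.
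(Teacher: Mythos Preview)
Your core insight---that rescalings $g_\lambda(x)=\lambda^{-1}f(\lambda x)$ all share the same Lipschitz constant and separate at scale $t_0/\lambda$---is exactly right, and is the whole content of the paper's proof. But you then manufacture an obstacle that does not exist and propose elaborate machinery (dyadic annuli, nets, McShane--Whitney extensions, cutoff bumps, telescoping) to overcome it.

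The ``obstacle'' you flag, that assembling infinitely many scales might blow up the Lipschitz constant, is not real when the target is an $\ell_\infty$-sum: if each $F_\lambda$ is $L$-Lipschitz then $(F_\lambda)_\lambda$ into $\ell_\infty(\Lambda,\ell_\infty)$ is $L$-Lipschitz, since the norm is a supremum, not a sum. No localization or cutoff is needed. The paper simply sets
\[
F(x)(q,n)=q^{-1}f(qx)_n,\qquad (q,n)\in\Q_+\times\N,
\]
so that $\|F(x)-F(y)\|=\sup_{q\in\Q_+}q^{-1}\|f(qx)-f(qy)\|\leq \mathrm{Lip}(f)\,\|x-y\|$ immediately, and for the lower bound one takes $q=t_0/\|x-y\|$ (continuity of $f$ lets one pass from $\Q_+$ to all positive reals) to get $\|F(x)-F(y)\|\geq (\overline{\rho}_f(t_0)/t_0)\,\|x-y\|$. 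That is the entire proof.

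So your proposal is not wrong in spirit, but the detour through nets and extensions is unnecessary (the rescaled maps are already globally defined and Lipschitz), and the cutoff/localization step you plan could in fact \emph{damage} the lower bound you are trying to establish, since multiplying by a bump may kill the separation exactly where you need it. Drop all of that: the one-line formula above already does the job.
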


\begin{proof}
Let $f:X\to \ell_\infty$ be a Lipschitz almost uncollapsed map. Pick $t>0$ such that $\overline{\rho}_f(t)>0$. Define a map $F:X\to \ell_\infty(\Q_+\times \N)$ by setting $F(x)(q,n)=q^{-1}f(qx)_n$, for all $x\in X$, and all $(q,n)\in \Q_+\times \N$. Then

$$\|F(x)-F(y)\|=\sup_{(q,n)\in \Q_+\times \N}q^{-1}\big|f(qx)_n-f(qy)_n\big|\leq \text{Lip} (f)\cdot\|x-y\|.$$\hfill

\noindent So, $F$ is also Lipschitz. Now notice that, as $f$ is continuous, we have that  

$$\|F(x)-F(y)\|=\sup_{q>0}q^{-1}\|f(qx)-f(qy)\|.$$\hfill

\noindent Hence, if $x\neq y$, by letting  $q=t\|x-y\|^{-1}$, we obtain that

$$\|F(x)-F(y)\|\geq  \frac{\|x-y\|}{t}\cdot\Big\|f\Big(\frac{tx}{\|x-y\|}\Big)-f\Big(\frac{ty}{\|x-y\|}\Big)\Big\|\geq \frac{\overline{\rho}_f(t)}{t}\cdot\|x-y\|.$$\hfill

\noindent So, $F$ is a Lipschitz embedding. 
\end{proof}

\begin{proof}[Proof of Theorem \ref{Thmlinfty}.]
By Theorem 5.3 of \cite{Ka4}, items (i), (ii) and (iii) of Problem \ref{mainproblem} are all equivalent. Using Proposition \ref{Rosendal} with $\mathcal{E}$ being the standard basis of $c_0$, we have that item (v) of Problem \ref{mainproblemPartII} implies item  (iv) of Problem \ref{mainproblemPartII}. Hence, we only need to show that  item (iv) of Problem \ref{mainproblemPartII} implies that $X$ Lipschitz embeds into $\ell_\infty$. For that, let $f: X\to \ell_\infty$ be a coarse solvent map. Without loss of generality, we may assume that $\|f(x)-f(y)\|\leq \|x-y\|$, for all $x,y\in X$, with $\|x-y\|\geq 1$. Let $N\subset X$ be a $1$-net. Then $f_{|N}$ is $1$-Lipschitz and solvent. Recall that  $\ell_\infty$ is a \emph{$1$-absolute Lipschitz retract}, i.e., every Lipschitz map $g:A\to \ell_\infty$, where $M$ is a metric space and $A\subset M$, has a $\text{Lip}(g)$-Lipschitz extension (see \cite{Ka3}, Subsection 3.3). Let $F$ be a Lipschitz extension of $f_{|N}$.  By Lemma \ref{SolventMapNet}, $F$ is solvent. Hence, by Lemma \ref{SonventLipschitz}, it follows that $X$ Lipschitz embeds into $\ell_\infty$. 
\end{proof}

\section{Open questions.}

Besides Problem \ref{mainproblem} and Problem \ref{mainproblemPartII}, there are many other interesting questions regarding those weaker kinds of embeddings. We mention a couple of them in this section.

Raynaud proved in \cite{Raynaud1983} (see the corollary in page 34 of \cite{Raynaud1983}) that if a Banach space $X$ uniformly embeds into a superstable space (see \cite{Raynaud1983} for definitions), then $X$ must contain an $\ell_p$, for some $p\in[1,\infty)$. Hence, in the context of those weaker embeddings, it is natural to ask the following.

\begin{problem}
Say an infinite dimensional Banach space $X$ maps into a superstable space by a map which is both uniformly continuous and almost uncollapsed. Does it follow that $X$ must contain $\ell_p$, for some $p\in [1,\infty)$.
\end{problem}

Similarly, if was proved in \cite{BragaSwift} that  if a Banach space $X$ coarsely embeds into a superstable space, then $X$ must contain an $\ell_p$-spreading model, for some $p\in[1,\infty)$. We ask the following.

\begin{problem}
Say an infinite dimensional Banach space $X$ maps into a superstable space by a map which is both coarse and solvent. Does it follow that $X$ must contain an $\ell_p$-spreading model, for some $p\in [1,\infty)$.
\end{problem}

The properties of a map being  solvent (resp. almost uncollapsed) are not necessarily stable under Lipschitz isomorphisms. Hence, the following question seems to be really important for the theory of solvent (resp. almost uncollapsed) maps between Banach spaces. 

\begin{problem}
Assume that there is no coarse solvent (resp. uniformly continuous almost uncollapsed) map $X\to Y$. Is this also true for any renorming of $X$?
\end{problem}

At last, we would like to notice that we have no results for maps $X \to Y$ which are coarse and almost uncollapsed. Hence, we ask the following.

\begin{problem}
What can we say if $X$ maps into $Y$ by a map which is coarse and almost uncollapsed map? Is this enough to obtain any restriction in the geometries of $X$ and $Y$?
\end{problem}

\noindent \textbf{Acknowledgments:} The author would like to thank his adviser C. Rosendal for all the help and attention he gave to this paper. The author would also like to thank  G. Lancien for suggesting to look at Kalton's Property $\cQ$.

\end{document}